\newtheorem{thm}{Theorem}[section]
\newtheorem{cor}[thm]{Corollary}
\newtheorem{lem}[thm]{Lemma}
\newtheorem{conj}[thm]{Conjecture}
\newtheorem{prop}[thm]{Proposition}
\newtheorem{quest}[thm]{Question}
\newtheorem{prob}[thm]{Problem}
\theoremstyle{definition}
\newtheorem{defn}{Definition}
\newcommand{\R}{\mathbb{R}}
\newcommand{\E}{\mathbb{E}}
\newcommand{\Ex}{\mathrm{E}}
\renewcommand{\l}{\left}
\renewcommand{\r}{\right}
\newcommand{\ex}{\mathrm{ex}}
\renewcommand{\c}[1]{\mathcal{#1}}
\newcommand{\sub}{\subseteq}
\newcommand{\Om}{\Omega}
\newcommand{\sm}{\setminus}
\newcommand{\f}[2]{\frac{#1}{#2}}
\newcommand{\al}{\alpha}
\newcommand{\ep}{\epsilon}
\newcommand{\del}{\delta}
\newcommand{\half}{\frac{1}{2}}
\newcommand{\Hom}{\mathrm{Hom}}
\keywords{Hypergraph, Sidorenko Conjecture, Random Turán Problem}
\title{Sidorenko Hypergraphs and Random Tur\'an Numbers}
\author[1]{Jiaxi Nie}
\author[2]{Sam Spiro\thanks{This material is based upon work supported by the National Science Foundation Mathematical Sciences Postdoctoral Research Fellowship under Grant No. DMS-2202730.}}
\affil[1]{%
School of Mathematics, Georgia Institute of Technology, Atlanta, GA, U.S.A.

\email{jnie47@gatech.edu}%
}
\affil[2]{%
Department of Mathematics, Rutgers University, Piscataway, NJ, U.S.A.

\email{sas703@scarletmail.rutgers.edu}%
}
\begin{document}

\maketitle


\begin{abstract}
Let $\mathrm{ex}(G_{n,p}^r,F)$ denote the maximum number of edges in an $F$-free subgraph of the random $r$-uniform hypergraph $G_{n,p}^r$, and let $s(F):=\sup\{s: \exists H,\ t_F(H)=t_{K_r^r}(H)^{s+e(F)}>0\}$.  Following recent work of Conlon, Lee, and Sidorenko, we prove non-trivial lower bounds on $\mathrm{ex}(G_{n,p}^r,F)$ whenever $s(F)>0$, i.e. $F$ is not Sidorenko. This connection between Sidorenko's conjecture and random Tur\'an problems gives new lower bounds on $\mathrm{ex}(G_{n,p}^r,F)$ whenever $s(F)>0$, and further allows us to establish upper bounds for $s(F)$ whenever upper bounds for $\mathrm{ex}(G_{n,p}^r,F)$ are known. As a consequence, we prove that $s(\Ex^r(K_{k+1}^k))=\frac{1}{r-k}$ where $\Ex^r(K_{k+1}^k)$ is the $r$-expansion of $K_{k+1}^k$.
\end{abstract}



\section{Introduction}

In recent work, Conlon, Lee, and Sidorenko~\cite{conlon2023extremal} established a connection between two seemingly unrelated problems in extremal combinatorics: Sidorenko's conjecture and Tur\'an problems. Building on their work, we further establish a connection between Sidorenko's conjecture and \textit{random} Tur\'an problems. Along the way, we obtain good estimations for how ``far'' certain hypergraphs $F$ are from being Sidorenko.

Throughout this paper we consider $r$-uniform hypergraphs, or $r$-graphs for short. For an $r$-graph $H=(V,E)$, we use the notation $v(H):=|V|$ and $e(H):=|E|$.

\subsection{Sidorenko's Conjecture}
Recall that a \textit{homomorphism} from an $r$-graph $F$ to an $r$-graph $H$ is a map $\phi:V(F)\to V(H)$ such that $\phi(e)$ is an edge of $H$ whenever $e$ is an edge of $F$.  We let $\hom(F,H)$ denote the number of homomrophisms from $F$ to $H$ and define the \textit{homomorphism density}
\[t_F(H)=\frac{\hom(F,H)}{v(H)^{v(F)}},\]
which is equivalently the probability that a random map $\phi:V(F)\to V(H)$ is a homomorphism. Sidorenko famously conjectured the following.
\begin{conj}[Sidorenko's Conjecture \cite{Sidorenko1991Inequalities,Sidorenko1993Acorrelation}]
    For every bipartite graph $F$ and every graph $H$,
    \begin{equation}\label{equation:Sidorenko}
        t_F(H)\ge t_{K_2}(H)^{e(F)}.
    \end{equation}
\end{conj}
This bound is asymptotically best possible by considering random graphs. A large body of literature is dedicated towards Sidorenko's conjecture~\cite{conlon2010approximate,conlon2017finite,conlon2018sidorenko,conlon2018some,coregliano2021biregularity,fox2017local,hatami2010graph,kim2016two,li2011logarithimic,lovasz2011subgraph,szegedy2014information}, but overall this problem remains very wide open. We call a graph $F$ \emph{Sidorenko} if it satisfies \eqref{equation:Sidorenko}.  This notion naturally extends to hypergraphs as follows.
\begin{defn}
    We say that an $r$-graph $F$ is \textit{Sidorenko} if for every $r$-graph $H$ we have
    \[t_F(H)\ge t_{K_r^r}(H)^{e(F)},\]
    where $K_r^r$ is the $r$-graph consisting of a single edge.
\end{defn}
This bound is also asymptotically best possible by considering random hypergraphs.  Observe that if $F$ is not $r$-partite, then $F$ is not Sidorenko (due to $H=K_r^r$, for example).  As such, it suffices to only consider $r$-partite $r$-graphs when discussing Sidorenko hypergraphs. 

It is known that Sidorenko's conjecture does not extend to hypergraphs.  In particular, Sidorenko~\cite{Sidorenko1993Acorrelation} showed that $r$-uniform loose triangles (see \Cref{def:expand}) are not Sidorenko for $r\ge 3$.  Because of this, there seems to have been relatively little interest in studying which $r$-graphs are Sidorenko.  This changed very recently with the work of Conlon, Lee, and Sidorenko \cite{conlon2023extremal} who showed the following surprising connection between non-Sidorenko hypergraphs and Tur\'an numbers.  Here we recall that the \textit{Tur\'an number} $\ex(n,F)$ of an $r$-graph $F$ is the maximum number of edges that an $n$-vertex $F$-free $r$-graph can have.

\begin{thm}[\cite{conlon2023extremal}]\label{thm:CLS}
    If $F$ is not Sidorenko, then there exists $c=c(F)$ such that
    \[\ex(n,F)=\Om\left(n^{r-\frac{v(F)-r}{e(F)-1}+c}\right).\]
\end{thm}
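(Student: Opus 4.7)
The plan is to amplify the non-Sidorenko property of $F$ via tensor powers of a witness and then apply a standard random blow-up plus deletion argument. Since $F$ is not Sidorenko, fix a witness $H^*$ on $h := v(H^*)$ vertices with $t_F(H^*) < t_{K_r^r}(H^*)^{e(F)}$, and set $C := t_{K_r^r}(H^*)^{e(F)}/t_F(H^*) > 1$, a constant measuring how badly $H^*$ fails the Sidorenko inequality. For each positive integer $t$, define the $t$-fold tensor power $H^{*(t)}$ on vertex set $V(H^*)^t$, where an $r$-set of vertices is an edge iff every coordinate projection lies in $E(H^*)$. The identity $\hom(F, H \times H') = \hom(F, H) \cdot \hom(F, H')$ yields $t_F(H^{*(t)}) = t_F(H^*)^t$ and $t_{K_r^r}(H^{*(t)}) = t_{K_r^r}(H^*)^t$, so the Sidorenko defect of $H^{*(t)}$ is $t_{K_r^r}(H^{*(t)})^{e(F)}/t_F(H^{*(t)}) = C^t$.

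Next, I would take a balanced blow-up $G$ of $H^{*(t)}$ to $n$ vertices, replacing each vertex of $H^{*(t)}$ by $k := \floor{n/h^t}$ copies. One computes $e(G) = \Theta\l(t_{K_r^r}(H^{*(t)})\, n^r\r)$ and $\hom(F, G) = t_F(H^{*(t)})\, n^{v(F)}$. Keep each edge of $G$ independently with probability $p$ and then remove one edge from each remaining copy of $F$ to obtain an $F$-free subgraph; its expected size is at least $p \cdot e(G) - p^{e(F)} \cdot \hom(F, G)$. Optimizing over $p$ in the standard way yields an $F$-free subgraph with
\[\Om\l(n^{r - (v(F)-r)/(e(F)-1)} \cdot C^{t/(e(F)-1)}\r)\]
edges. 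Choosing $t := \floor{\log n / (2 \log h)}$ ensures $h^t \le n^{1/2}$ (so the blow-up is well-defined) and makes $C^t = n^{\log C/(2 \log h)}$, yielding the theorem with $c := \log C / (2(e(F)-1) \log h) > 0$.

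The main obstacle is identifying the correct amplification device: a direct application of the random blow-up to $H^*$ only gives a constant-factor improvement over the trivial random construction. The key insight is that the homomorphism densities $t_F$ and $t_{K_r^r}$ multiply under tensor products, so the constant Sidorenko defect $C > 1$ boosts to $C^t$, and taking $t$ logarithmic in $n$ converts this into an $n^c$ improvement. The tensor-product identity itself requires only a brief verification (some care is needed in defining edges of $H \times H'$ as $r$-sets of distinct pairs whose coordinate projections both lie in the respective edge sets), and the optimization of $p$ in the random deletion is routine.
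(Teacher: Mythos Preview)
Your argument is correct and rests on the same idea as the paper's treatment (Lemma~\ref{lem:relative} and Corollary~\ref{cor:classicTuran}): amplify the Sidorenko defect via tensor powers of a witness $H$ and then run a blow-up--plus--deletion construction. The paper packages this more directly: rather than taking a balanced blow-up of $H^{\otimes t}$ followed by a separate random $p$-sparsification, it chooses a single uniformly random map $\phi\colon [n]\to V(H^{\otimes N})$ and keeps exactly those edges of $K_n^r$ that $\phi$ sends bijectively onto edges of $H^{\otimes N}$. This random homomorphism simultaneously plays the roles of your blow-up and your sparsification, so only the one parameter $N$ needs to be tuned. Moreover, because a random homomorphism imposes no constraint $v(H^{\otimes N})\le n$, the paper can push $N$ further and obtain the stronger constant $c=\frac{(v(F)-r)s}{(e(F)-1)(s+e(F)-1)}$ of Corollary~\ref{cor:classicTuran}; your balanced blow-up is capped at $h^t\le n$, and with your conservative choice $h^t\le \sqrt n$ you get $c=\frac{\delta s}{2(e(F)-1)}$ in the notation of Theorem~\ref{thm:CLSquant}, a factor of $2$ below even the original quantitative CLS bound. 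None of this affects the qualitative statement you were asked to prove.
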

We note that this improves upon the trivial lower bound $\ex(n,F)=\Om(n^{r-\frac{v(F)-r}{e(F)-1}})$ which holds for all $F$ by a standard random deletion argument.  A number of other results around non-Sidorenko hypergraphs were proven in \cite{conlon2023extremal}, such as the fact that $r$-partite $r$-graphs of odd girth are always non-Sidorenko.

\subsection{The Random Tur\'an Problem}

We now discuss our second problem of interest:  the random Tur\'an problem. Given $r$-graphs $G,F$, we define $\ex(G,F)$ to be the maximum number of edges in an $F$-free subgraph of $G$. Define $G_{n,p}^r$ to be the random $r$-graph on $n$ vertices obtained by including each possible edge independently and with probability $p$.  When $r=2$ we simply write $G_{n,p}$ instead of $G_{n,p}^2$.  We call $\ex(G_{n,p}^r,F)$ the \textit{random Tur\'an number}, which is the maximum number of edges in an $F$-free subgraph of $G_{n,p}^r$.  Note that when $p=1$ we have $\ex(G_{n,1}^r,F)=\ex(n,F)$, so this can be viewed as a probabilistic analog of the classical Tur\'an number.

The asymptotics of $\ex(G_{n,p}^r,F)$ has essentially been determined whenever $F$ is not an $r$-partite $r$-graph due to independent breakthrough work of Schacht \cite{schacht2016extremal} and of Conlon and Gowers \cite{conlon2016combinatorial}, and as such we will focus only on the case when $F$ is $r$-partite.   For $r=2$, McKinely and Spiro \cite{mckinley2023random} recently conjectured the following, where here we define the $r$-density of an $r$-graph $F$ with $v(F)>r$ by
\[m_r(F):=\max_{F'\subseteq F:v(F')>r}\l\{\frac{e(F')-1}{v(F')-r}\r\},\]
and we say that $F$ is \textit{$r$-balanced} if $m_r(F)=\frac{e(F)-1}{v(F)-r}$.
\begin{conj}[\cite{mckinley2023random}]\label{conj:MS}
    If $F$ is a graph with $\ex(n,F)=\Theta(n^\al)$ for some $\al\in (1,2]$, then a.a.s.\
    \[\ex(G_{n,p},F)=  \begin{cases}\Theta(p^{\al-1}n^\al) & p\ge n^{\f{2-\al-1/m_2(F)}{\al-1}} (\log n)^{O(1)}\\n^{2-1/m_2(F)}(\log n)^{O(1)} & n^{\f{2-\al-1/m_2(F)}{\al-1}}(\log n)^{O(1)} \ge p\ge n^{-1/m_2(F)},\\ (1+o(1))p {n\choose 2} & n^{-1/m_2(F)}\gg p\gg n^{-2}.\end{cases}\]
\end{conj}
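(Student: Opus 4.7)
I would attack \Cref{conj:MS} regime by regime, establishing matching upper and lower bounds in each of the three ranges of $p$.

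\emph{Small $p$.} In the range $n^{-2}\ll p\ll n^{-1/m_2(F)}$, the upper bound $(1+o(1))p\binom{n}{2}$ is immediate from $|E(G_{n,p})|\sim p\binom{n}{2}$ a.a.s. For the lower bound, let $F^\ast\sub F$ achieve the maximum in the definition of $m_2(F)$, so that $F^\ast$ is $2$-balanced. A first-moment computation shows that the expected number of copies of $F^\ast$ in $G_{n,p}$ is $p^{e(F^\ast)}n^{v(F^\ast)}=o(pn^2)$ throughout this range. Removing one edge per copy of $F^\ast$ destroys every copy of $F$ (each of which must contain a copy of $F^\ast$) and removes only $o(pn^2)$ edges, leaving an $F$-free subgraph with $(1+o(1))p\binom{n}{2}$ edges.

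\emph{Intermediate $p$.} The target $n^{2-1/m_2(F)}(\log n)^{O(1)}$ matches the classical deletion/KST-type bound. The lower bound should follow from running the random $F$-free greedy process inside $G_{n,p}$ in the spirit of Bohman-Keevash, showing a.a.s.\ that the process accumulates $\Theta(n^{2-1/m_2(F)})$ edges before stalling. The matching upper bound would go through the hypergraph container method of Balogh-Morris-Samotij and Saxton-Thomason, fed by a balanced supersaturation statement on the number of suitably distributed copies of $F$ in any sufficiently dense host graph, producing a small family of containers each holding at most $n^{2-1/m_2(F)}(\log n)^{O(1)}$ edges of any $F$-free subgraph; a union bound over containers then upgrades this to a bound that holds a.a.s.\ inside $G_{n,p}$.

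\emph{Large $p$ and main obstacle.} Here the target is $\Theta(p^{\al-1}n^\al)$. For the lower bound one would begin from a classical extremal $F$-free graph $G^\ast$ with $e(G^\ast)=\Theta(n^\al)$ and carry out a structured sampling inside $G^\ast$ exploiting its local density, since naive sampling at rate $p$ only yields $pn^\al\ll p^{\al-1}n^\al$ edges; explicit algebraic constructions of $G^\ast$ would help in handling particular $F$. The matching upper bound again goes through containers, but now requires a KLR-type supersaturation statement: every subgraph of $G_{n,p}$ with $\omega(p^{\al-1}n^\al)$ edges contains many copies of $F$ arranged regularly enough for the container algorithm. This is the decisive hurdle: KLR-type supersaturation for general bipartite $F$ is open and is known only for special families such as even cycles or certain $K_{s,t}$. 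A plausible route to partial progress is to feed the non-Sidorenko machinery developed in the main body of this paper into the container framework to settle the conjecture for non-Sidorenko bipartite $F$, thereby tying the remaining Sidorenko case of \Cref{conj:MS} directly to Sidorenko's conjecture itself.
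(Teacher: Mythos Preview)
The paper does not prove \Cref{conj:MS}; it is stated there as an open conjecture (due to McKinley and Spiro) and is only cited as motivation. There is therefore no proof in the paper to compare your proposal against. Your outline is a reasonable high-level sketch of how one would \emph{attempt} the conjecture, and you correctly identify the decisive obstacle: balanced/KLR-type supersaturation for general bipartite $F$ is wide open, which is exactly why \Cref{conj:MS} remains a conjecture. The small-$p$ deletion argument is fine; the middle and large ranges are not proofs but wish-lists of ingredients that are not currently available in general.

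One substantive error in your final paragraph: the non-Sidorenko machinery of this paper points in the \emph{opposite} direction from what you suggest. \Cref{thm:main} shows that if an $r$-partite $F$ has $s(F)>0$ then $\ex(G_{n,p}^r,F)$ grows strictly as a power of $p$ throughout the would-be ``flat middle range,'' contradicting the shape predicted by \Cref{conj:MS}. Specialised to $r=2$, this means that any non-Sidorenko bipartite graph would be a \emph{counterexample} to \Cref{conj:MS}, not a case you could settle via containers. So the correct takeaway is that \Cref{conj:MS} (at least its middle range) already presupposes that every bipartite $F$ is Sidorenko; the paper's methods cannot be used to prove \Cref{conj:MS} for non-Sidorenko bipartite $F$, because for such $F$ the conjecture would be false.
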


In particular, \Cref{conj:MS} predicts that for bipartite graphs, $\ex(G_{n,p},F)$ always has a ``flat middle range'' where $\ex(G_{n,p},F)=n^{2-1/m_2(F)}(\log n)^{O(1)}$ for the entire range $n^{-1/m_2(F)}\le p\le n^{\frac{2-\al-1/m_2(F)}{\al-1}}(\log n)^{O(1)}$; see \Cref{fig:enter-label} for an example.  \Cref{conj:MS} is known to hold (assuming certain conjectures regarding $\ex(n,F)$) for complete bipartite graphs and even cycles \cite{morris2016number}, and for theta graphs \cite{mckinley2023random}, with the lower bound being known to hold for powers of rooted trees \cite{spiro2024random}.

\begin{figure}
    \centering
    \includegraphics[width=0.4\textwidth]{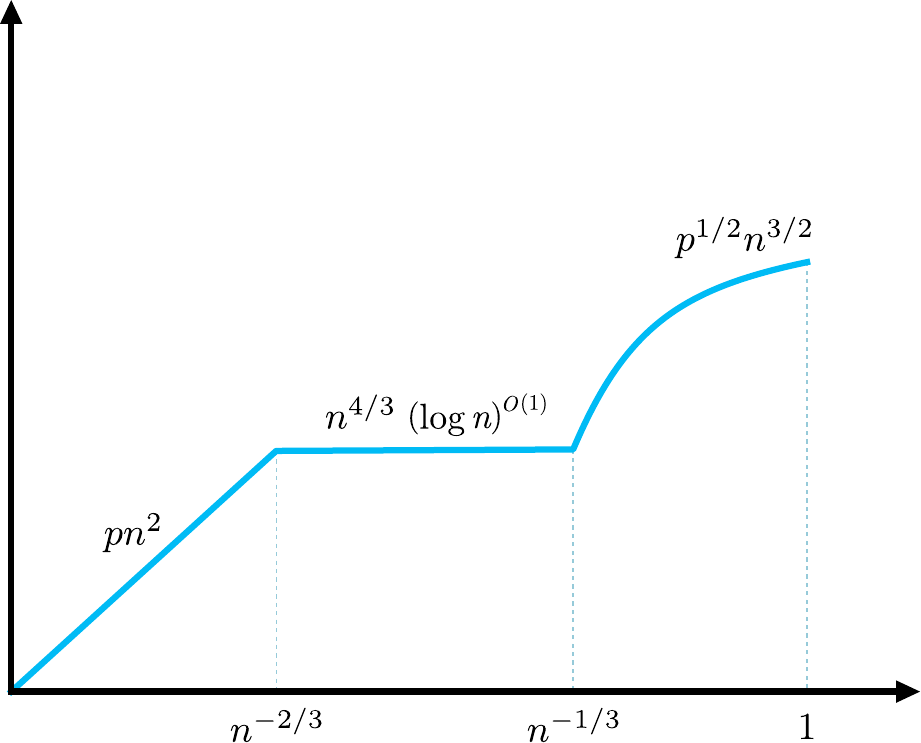}
    \caption{The plot of $\ex(G_{n,p},C_4)$ as proven by F\"uredi~\cite{F}.  More generally, \Cref{conj:MS} predicts that $\ex(G_{n,p},F)$ should have a ``flat middle range'' for every bipartite graph $F$ starting at $p=n^{-1/m_2(F)}$.}
    \label{fig:enter-label}
\end{figure}

It is known that \Cref{conj:MS} does not extend to hypergraphs.   In particular, Nie, Spiro, and Verstra\"ete \cite{nie2021triangle} and Nie \cite{nie2023random} showed this does not hold for $r$-uniform loose triangles with $r\ge 3$.

\subsection{Our Results}

The reader may notice that the situation for Sidorenko $r$-graphs and for random Tur\'an problems parallel each other quite closely, in particular with regard to loose triangles serving as a counterexample to each problem.  The main result of this paper (\Cref{thm:main}) shows that this is not a coincidence: in many cases, $r$-graphs which are not Sidorenko have a stronger lower bound on $\ex(G_{n,p}^r,F)$ than the generalization of \Cref{conj:MS} would predict. 

We prove this by extending \Cref{thm:CLS} to give lower bounds for random Tur\'an numbers.  In addition to this, we give a quantitative statement which in several cases gives optimal bounds for $\ex(G_{n,p}^r,F)$.  To state the quantitative result, we define\footnote{We emphasize that we define $s(F)$ in terms of a supremum rather than a maximum.  As such, we do not require there to exist an $H$ satisfying $t_F(H)=t_{K_r^r}(H)^{s+e(F)}$, though we note that such an $H$ can be guaranteed if one shifts to the essentially equivalent perspective of hypergraphons.} for an $r$-graph $F$ the quantity 
\[s(F):=\sup\{s: \exists H,\ t_F(H)=t_{K_r^r}(H)^{s+e(F)}>0\}.\]
Note that $s(F)=0$ if and only if $F$ is Sidorenko, and more generally, $s(F)$ measures how ``far'' an $r$-graph is from being Sidorenko.  

The following is our main result.

\begin{thm}\label{thm:main}
    If $F$ is an $r$-graph with $e(F)\ge 2$ and $\frac{v(F)-r}{e(F)-1}<r$,  then for any $p=p(n)\ge n^{-\frac{v(F)-r}{e(F)-1}}$, we have a.a.s.
    \[\ex(G_{n,p}^r,F)\ge n^{r-\frac{v(F)-r}{e(F)-1}-o(1)}(p n^{\frac{v(F)-r}{e(F)-1}})^{\frac{s(F)}{e(F)-1+s(F)}}.\]
\end{thm}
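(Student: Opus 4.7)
The plan is to adapt the Conlon--Lee--Sidorenko proof of \Cref{thm:CLS} to the random setting: overlay a non-Sidorenko ``template'' on $G_{n,p}^r$ via a random coloring, then run a deletion argument. Fix $\delta>0$ and use the definition of $s(F)$ to choose a finite $r$-graph $H_0$ with $m_0:=v(H_0)$ and $D_0:=t_{K_r^r}(H_0)\in(0,1)$ satisfying $t_F(H_0)\le D_0^{e(F)+s(F)-\delta}$. For each integer $N\ge 1$, the $N$-fold tensor (categorical) product $H_N:=H_0^{\otimes N}$ has $v(H_N)=m_0^N$, $D_N:=t_{K_r^r}(H_N)=D_0^N$, and $t_F(H_N)=t_F(H_0)^N\le D_N^{e(F)+s(F)-\delta}$, so the defining inequality is preserved. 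Setting $D^*:=(n^{-(v(F)-r)}p^{-(e(F)-1)})^{1/(e(F)-1+s(F)-\delta)}$, which is at most $1$ by the hypothesis on $p$, choose $N$ to be the least integer with $D_N\le D^*$; by minimality $D_N\ge D_0 D^*$.

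Sample a uniformly random map $\phi:[n]\to V(H_N)$ and let $G_\phi$ be the subgraph of $G_{n,p}^r$ consisting of those edges $\{x_1,\dots,x_r\}$ for which $\{\phi(x_1),\dots,\phi(x_r)\}\in E(H_N)$. For each fixed $r$-set $S$ one computes $\Pr_\phi[\phi(S)\in E(H_N)]=D_N$ exactly, and for each injection $\iota:V(F)\hookrightarrow[n]$ one has $\Pr_\phi[\phi\circ\iota\in\Hom(F,H_N)]=t_F(H_N)$. Linearity of expectation over the joint $(\phi,G_{n,p}^r)$ randomness therefore yields
\[
\Ex|E(G_\phi)|=(1+o(1))\,r!^{-1} D_N n^r p,\qquad \Ex[\#F\text{-copies in }G_\phi]\le t_F(H_N)\, n^{v(F)}p^{e(F)}\le D_N^{e(F)+s(F)-\delta}\, n^{v(F)}p^{e(F)}.
\]
The choice $D_N\le D^*$ makes the second quantity at most a constant times the first. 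A first-moment/deletion argument then produces an $F$-free subgraph with at least $\Omega(D_N n^r p)=\Omega(D_0 D^* n^r p)$ edges, which after algebraic simplification is exactly $n^{r-(v(F)-r)/(e(F)-1)-o(1)}(pn^{(v(F)-r)/(e(F)-1)})^{s(F)/(e(F)-1+s(F))}$, since the $n^{-o(1)}$ absorbs $\delta$, $r!^{-1}$, $D_0$, and the difference between $s(F)-\delta$ and $s(F)$ in the exponents as $\delta\to 0$.

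To promote this expected-value argument to an a.a.s.\ statement, I would first apply a standard second-moment/Markov argument to the marginal distribution of $\phi$ alone to fix a deterministic coloring $\phi_0$ for which the number of ``colored'' $r$-sets $|\{S:\phi_0(S)\in E(H_N)\}|$ is $(1-o(1))D_N\binom{n}{r}$ and the number of injections $\iota:V(F)\hookrightarrow[n]$ with $\phi_0\circ\iota\in\Hom(F,H_N)$ is $O(n^{v(F)}D_N^{e(F)+s(F)-\delta})$. Conditioned on such $\phi_0$, $|E(G_{\phi_0})|$ is a sum of independent Bernoullis and concentrates by Chernoff, while the number of $F$-copies in $G_{\phi_0}$ is a low-degree polynomial of the edge indicators whose expectation is controlled by the bound above, so Markov's inequality suffices to bound it with probability $1-o(1)$.

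The main technical obstacle is bookkeeping: verifying that the various constants (the $r!$ from sets-vs.-tuples, overcounts from non-injective homomorphisms in $\Hom(F,H_N)$, and the constant-factor slack from tuning $D_N$ along the geometric sequence $\{D_0^N\}$) remain independent of $n$ so that they can be absorbed into $n^{-o(1)}$, and that the second-moment concentration of $|K_{\phi}|:=|\{S:\phi(S)\in E(H_N)\}|$ under random $\phi$ holds uniformly as $N$ grows with $n$. No Sidorenko-type inequality beyond the defining property of $s(F)$ should be required.
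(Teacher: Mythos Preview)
Your overall strategy---tensor a non-Sidorenko witness $H_0$, randomly color $[n]$ by $V(H_0^{\otimes N})$, keep well-colored edges, and delete one edge per surviving copy of $F$---is exactly the paper's. The difference lies in how you pass from expectation to an a.a.s.\ statement. You propose first fixing a deterministic coloring $\phi_0$ with many colored $r$-sets (via a second-moment argument in $\phi$) and few colored $F$-embeddings (via Markov), and only then invoking Chernoff/Markov in the $G_{n,p}^r$ randomness. The paper instead packages the coloring-plus-deletion step as a \emph{deterministic} inequality valid for every host $G$:
\[
\ex(G,F)\;\ge\; \alpha^N\, e(G)\;-\;\alpha^{(s+e(F))N}\,\c{N}_F(G),
\]
obtained simply by averaging over $\phi$ (some $\phi$ achieves at least the expected value of edges minus $F$-copies). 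Applying this with $G=G_{n,p}^r$ reduces the a.a.s.\ claim to Chernoff on $e(G_{n,p}^r)$ and Markov on $\c{N}_F(G_{n,p}^r)$ alone; no concentration over $\phi$ is ever needed.

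This is not a cosmetic difference: the second-moment issue you flag at the end is real, not just bookkeeping. The variance of $|K_\phi|$ picks up, for each $1\le j\le r-1$, roughly $n^{2r-j}\,t_{F_j}(H_0)^N$ where $F_j$ is two $r$-edges sharing $j$ vertices; comparing with $(\E|K_\phi|)^2\asymp n^{2r}D_0^{2N}$ requires $\bigl(t_{F_j}(H_0)/D_0^{2}\bigr)^{N}\ll n^{j}$. Since $N$ grows like $c\log n$ and $t_{F_j}(H_0)/D_0^{2}$ can be as large as $1/D_0$ for irregular $H_0$, this can fail (for instance when $(v(F)-r)/(e(F)-1+s)\ge 1$, which is typical for $r\ge 3$). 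So your route would force extra hypotheses on $H_0$ or a regularization step, whereas the paper's deterministic lemma sidesteps the problem entirely.
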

Our strategy for proving \Cref{thm:main} is based off \cite{conlon2023extremal} with a more careful execution. Because of this, the bound of \Cref{thm:main} will turn out to always be at least as strong as the (implicit quantitative version of) \Cref{thm:CLS}; see \Cref{cor:classicTuran} and the surrounding discussion for more.  In particular, the quantitative bounds of \Cref{thm:main} combined with known results for random Tur\'an numbers can be used to give effective (or even tight) bounds on $s(F)$ for a number of hypergraphs; see Corollaries~\ref{cor:upperBounds} and \ref{cor:Simplex} for more.


We emphasize that Theorem~\ref{thm:main} is only non-trivial when $s(F)>0$, i.e.\ when $F$ is not Sidorenko.  When $s(F)>0$ and $F$ is $r$-balanced, \Cref{thm:main} shows that $\ex(G_{n,p}^r,F)$ does not have a ``flat middle range'' as predicted in the case of graphs by  \Cref{conj:MS}.  

Assuming $F$ has no isolated vertices, then the only $F$ not satisfying our conditions are matchings, which are known to be Sidorenko. Furthermore, the random Turán problem for matching of size two (i.e. the EKR problem in random hypergraph) has been essentially solved by Balogh, Bohman, and Mubayi~\cite{balogh2009erdHos}, see also~\cite{hamm2019erdHosI,hamm2019erdHos,gauy2017erdHos,balogh2023sharp}. Their results have recently been extended to matchings of any size by Frankl, Nie, and Wang~\cite{frankl2024matching}.

To get effective bounds in \Cref{thm:main}, one must determine how large $s(F)$ is, i.e.\ how ``far'' from Sidorenko $F$ is; and we believe this to be a problem of independent interest.
\begin{prob}\label{prob:sF}
    Given an $r$-graph $F$, determine (bounds for) $s(F)$.
\end{prob}

\Cref{thm:main} allows us to translate  between the random Tur\'an problem and the Sidorenko-type problem \Cref{prob:sF}.  Specifically, \Cref{thm:main} shows that lower bounds on $s(F)$ give lower bounds for the random Tur\'an problem, and conversely, upper bounds on the random Tur\'an problem give upper bounds on $s(F)$.  For example, the non-Sidorenko $r$-graphs established by Conlon, Lee, and Sidorenko~\cite[Theorem 3.1]{conlon2023extremal} gives the following result for the random Tur\'an problem. 
\begin{cor}
    Let $F$ be an $r$-partite $r$-graph of odd girth.  Then there exists an $\ep>0$ such that if $p\ge n^{-\frac{v(F)-r}{e(F)-1}}$, then 
    a.a.s.
    \[\ex(G_{n,p}^r,F)\ge n^{r-\frac{v(F)-r}{e(F)-1}-o(1)}(p n^{\frac{v(F)-r}{e(F)-1}})^{\ep}.\]
\end{cor}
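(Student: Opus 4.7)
The corollary is a short deduction from \Cref{thm:main} combined with one known non-Sidorenko result. First, I would invoke \cite[Theorem 3.1]{conlon2023extremal}, which asserts that every $r$-partite $r$-graph $F$ of odd girth fails to be Sidorenko. The remark following the definition of $s(F)$ in the excerpt (namely that $s(F)=0$ if and only if $F$ is Sidorenko) then upgrades this to $s(F)>0$.

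Next, I would verify the two hypotheses of \Cref{thm:main}. The bound $e(F)\ge 2$ is automatic, since $F$ contains an odd cycle and hence at least three edges. The inequality $\frac{v(F)-r}{e(F)-1}<r$ can be rewritten as $v(F)<r\cdot e(F)$; the universal bound $v(F)\le r\cdot e(F)$ holds with equality only when $F$ is a matching, and matchings have no cycles, so the presence of an odd cycle forces the needed strict inequality.

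Having verified the hypotheses, I would then apply \Cref{thm:main} with the positive constant
\[\ep:=\frac{s(F)}{e(F)-1+s(F)}>0.\]
The conclusion of \Cref{thm:main} then reads literally as
\[\ex(G_{n,p}^r,F)\ge n^{r-\frac{v(F)-r}{e(F)-1}-o(1)}\bigl(pn^{\frac{v(F)-r}{e(F)-1}}\bigr)^{\ep}\]
holding a.a.s.\ in the stated range of $p$, which is exactly the claim.

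Since the argument is a one-step translation through \Cref{thm:main}, I do not expect any substantive obstacle. The only mild point of care is the verification of $v(F)<r\cdot e(F)$; this is genuinely mild, as it just uses that matchings are the unique hypergraphs saturating the trivial bound $v(F)\le r\cdot e(F)$ and that matchings contain no cycles at all. All of the real work is being done inside \Cref{thm:main} and \cite[Theorem 3.1]{conlon2023extremal}; the corollary merely packages their combination.
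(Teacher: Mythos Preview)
Your proposal is correct and follows exactly the route the paper takes: the paper presents this corollary as an immediate consequence of \cite[Theorem~3.1]{conlon2023extremal} (giving $s(F)>0$) plugged into \Cref{thm:main}, without writing out a formal proof. Your additional care in checking the hypotheses $e(F)\ge 2$ and $\frac{v(F)-r}{e(F)-1}<r$ is a welcome bit of rigor that the paper leaves implicit.
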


On the other hand, the simplest case of $p=1$ in \Cref{thm:main} gives the following general bound on $s(F)$.

\begin{cor}\label{cor:General}
    If $F$ is an $r$-graph with $\ex(n,F)=O(n^\al)$ and $\al<r$, then
    \[s(F)\le \frac{v(F)-\al}{r-\al}-e(F).\]
\end{cor}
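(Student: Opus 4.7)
The plan is to apply \Cref{thm:main} at $p = 1$ and compare the resulting lower bound for $\ex(n, F)$ against the hypothesis $\ex(n, F) = O(n^\al)$; the inequality that emerges then rearranges algebraically to give the claimed bound on $s(F)$. First I would address the edge cases where \Cref{thm:main} does not directly apply: if $e(F) = 1$, then $F = K_r^r$ and $s(F) = 0$, while if $\f{v(F)-r}{e(F)-1} = r$, then $F$ must be a matching (each edge contributes $r$ new vertices), for which $s(F) = 0$ as well. In both cases the desired bound holds trivially, so I may assume the hypotheses of \Cref{thm:main} throughout.

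With $p = 1$ and writing $d := \f{v(F)-r}{e(F)-1}$ for brevity, \Cref{thm:main} yields
\[\ex(n, F) \ge n^{r - d - o(1)} \cdot n^{d \cdot \f{s(F)}{e(F)-1+s(F)}}.\]
Using $d(e(F)-1) = v(F) - r$, the total exponent on the right simplifies as
\[r - d \cdot \l(1 - \f{s(F)}{e(F)-1+s(F)}\r) - o(1) = r - \f{v(F)-r}{e(F)-1+s(F)} - o(1).\]

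Combining this with $\ex(n, F) = O(n^\al)$ and letting $n \to \infty$ forces $\al \ge r - \f{v(F)-r}{e(F)-1+s(F)}$. Since $\al < r$, I can divide by $r - \al > 0$ to obtain $e(F) - 1 + s(F) \le \f{v(F)-r}{r-\al}$, which rearranges via $\f{v(F)-r}{r-\al} + 1 = \f{v(F)-\al}{r-\al}$ to the desired bound $s(F) \le \f{v(F)-\al}{r-\al} - e(F)$. The only subtlety is that $s(F)$ is defined as a supremum which may not be attained, so strictly speaking \Cref{thm:main} should be applied with some $s' < s(F)$ in place of $s(F)$, with the limit $s' \to s(F)$ comfortably absorbed by the $n^{-o(1)}$ slack. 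Beyond this mild technicality there is no real obstacle---the corollary is essentially a direct algebraic consequence of \Cref{thm:main} at $p = 1$.
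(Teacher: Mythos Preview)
Your proof is correct and follows essentially the same route as the paper: apply the $p=1$ case of the main theorem to get $\ex(n,F)\ge n^{r-\frac{v(F)-r}{e(F)-1+s(F)}-o(1)}$, compare against $O(n^\al)$, and rearrange. The paper phrases this via \Cref{cor:classicTuran} (the deterministic $p=1$ consequence of \Cref{lem:relative}) applied to an arbitrary $s$ witnessed by some $H$ and then takes the supremum, whereas you invoke \Cref{thm:main} directly with $s(F)$ already built in; your final remark about the supremum is therefore unnecessary, since \Cref{thm:main} is stated with $s(F)$ itself. Your explicit handling of the edge cases $e(F)=1$ and $\frac{v(F)-r}{e(F)-1}=r$ (which \Cref{thm:main} excludes but \Cref{cor:classicTuran} does not) is a small bonus over the paper's presentation.
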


While \Cref{cor:General} can sometimes provide tight bounds (see \Cref{cor:Simplex}), in general more effective bounds are obtained by considering smaller values of $p$.  To state these improved bounds, we make the following definitions which will be used throughout the paper.
\begin{defn}\label{def:expand}
    Given a $k$-graph $F$, we define the $r$-expansion $\Ex^r(F)$ to be the $r$-graph obtained by enlarging each $k$-edge of $F$ with a set of $r-k$ vertices of degree one.  When $F$ is the graph cycle $C_\ell$, we write $C_{\ell}^r:=\Ex^r(C_\ell)$ and refer to this as the $r$-uniform \textit{loose cycle} of length $\ell$.
\end{defn}
Lastly, we define an $r$-graph $T$ to be a {\em tight $r$-tree} if its edges can be ordered as $e_1,\dots,~e_t$ so that 
$$
\forall i\ge 2~\exists v\in e_i~and~1\le s\le i-1~such~that~v\not\in\cup_{j=1}^{i-1}e_j~and~e_i-v\subset e_s.
$$
The following bounds for $s(F)$ turn out to follow from \Cref{thm:main} together with known bounds for random Tur\'an numbers \cite{mubayi2023random,nie2023random,nie2024turan}; see the end of \Cref{sec:main} for further details on the derivation.

\begin{cor}\label{cor:upperBounds}
~
    \begin{itemize}
        \item[(a)] For $r>k \ge 2$, we have \[s(\Ex^r(K^k_{k+1}))\le \frac{1}{r-k}.\]
        \item[(b)] For $\ell \ge 1$ and $r\ge 3$, we have
        \[s(C^r_{2\ell+1})\le\frac{2\ell-1}{r-2}.\]
        \item[(c)] For $r\ge k\ge 2$ and any tight $k$-tree $T$, $s(\Ex^r(T))=0$. That is, expansions of tight trees are Sidorenko. 
        \item[(d)] For $\ell \ge 1$ and $r\ge 3$ we have $s(C_{2\ell}^r)=0$.  That is, loose even cycles are Sidorenko.
    \end{itemize}
\end{cor}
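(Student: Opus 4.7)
The strategy is to derive each bound by invoking \Cref{thm:main} in its contrapositive form. Write $\gamma:=\frac{v(F)-r}{e(F)-1}$ and $\tau:=\frac{s(F)}{e(F)-1+s(F)}$, so that \Cref{thm:main} asserts that a.a.s.\
\[\ex(G_{n,p}^r,F)\ge n^{r-\gamma-o(1)}\bigl(pn^\gamma\bigr)^{\tau}\]
whenever $p\ge n^{-\gamma}$. Consequently, if one can find an a.a.s.\ upper bound of the shape $\ex(G_{n,p}^r,F)\le n^{r-\gamma+o(1)}\bigl(pn^\gamma\bigr)^{\be}$ at some $p$ for which $pn^\gamma$ is bounded below by a positive power of $n$ (for instance $p=1$, giving $pn^\gamma=n^\gamma$), then comparing the two inequalities forces $\tau\le\be$. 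Inverting the relation $\tau=s(F)/(e(F)-1+s(F))$ then yields
\[s(F)\le \frac{\be(e(F)-1)}{1-\be}.\]

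For parts (c) and (d) the plan is to apply this with $\be=0$: the random Tur\'an results of \cite{nie2023tur,nie2023random} establish $\ex(G_{n,p}^r,F)=n^{r-\gamma+o(1)}$ throughout the relevant range of $p$ when $F=\Ex^r(T)$ is the expansion of a tight $k$-tree or $F=C^r_{2\ell}$ is a loose even cycle. The displayed inequality then forces $s(F)=0$, so both families are Sidorenko. For parts (a) and (b), one plugs in the quantitative random Tur\'an bounds from \cite{mubayi2023random,nie2023random,nie2023tur}, which translate to the specific values $\be=\tfrac{1}{k(r-k)+1}$ and $\be=\tfrac{2\ell-1}{2\ell r-2\ell-1}$ respectively; substituting into $\be(e(F)-1)/(1-\be)$ and simplifying returns the advertised bounds $\tfrac{1}{r-k}$ and $\tfrac{2\ell-1}{r-2}$.

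The main obstacle is bookkeeping rather than anything conceptual. Random Tur\'an results in the literature are typically stated in the form $n^\alpha p^\beta(\log n)^{O(1)}$ rather than in the normalization $n^{r-\gamma+o(1)}(pn^\gamma)^{\be}$ used above, so each bound must be algebraically rewritten and its range of validity checked against the range $p\ge n^{-\gamma}$ required by \Cref{thm:main}. Once the known bounds are cast in the prescribed form, each part of the corollary follows by direct substitution.
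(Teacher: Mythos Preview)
Your proposal is correct and follows essentially the same approach as the paper: both argue contrapositively from \Cref{thm:main}, comparing its lower bound against the known random Tur\'an upper bounds from \cite{mubayi2023random,nie2023random,nie2023tur} at a value of $p$ slightly above $n^{-\gamma}$, extracting the inequality $\tau\le\be$, and then inverting to bound $s(F)$. The paper organizes the casework identically and obtains the same intermediate inequalities $\tfrac{s}{k+s}\le\tfrac{1}{(r-k)k+1}$ and $\tfrac{s}{2\ell+s}\le\tfrac{2\ell-1}{2\ell(r-1)-1}$ that your values of $\be$ encode; the only cosmetic difference is that the paper explicitly records that each $F$ is $r$-balanced (so $\gamma=1/m_r(F)$), which you fold into the ``bookkeeping'' remark.
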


In addition to these immediate consequences of \Cref{thm:main}, we prove two new results related to $s(F)$ and expansions.  First, we show that expansions of $F$ which contain  $K_{k+1}^k$ are not Sidorenko.

\begin{thm}\label{thm:containKk}
    If $F$ is a $k$-graph which contains $K_{k+1}^k$ as a subgraph, then for all $r>k$ we have 
    \[s(\Ex^r(F))\ge\frac{1}{r-k}.\]
    In particular, $\Ex^r(F)$ is not Sidorenko.
\end{thm}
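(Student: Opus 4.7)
To prove $s(\text{Ex}^r(F)) \ge \tfrac{1}{r-k}$, my goal is to exhibit an $r$-graph $H$ satisfying
\[t_{\text{Ex}^r(F)}(H) \le t_{K_r^r}(H)^{e(F) + 1/(r-k)}.\]
My strategy proceeds in two stages, first handling the base case $F = K_{k+1}^k$ and then extending to arbitrary $F \supseteq K_{k+1}^k$.

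For the base case, I would construct $H$ as an ``expansion'' of a $K_{k+1}^k$-free $k$-graph $J$ on $M$ vertices (for instance a complete $k$-partite $k$-graph $K_k^k(M)$, or a dense extremal $K_{k+1}^k$-free construction of Kővári-Sós-Turán type), defined either by $H = \text{Ex}^r(J)$ or by attaching a fixed $(r-k)$-set $U$ of auxiliary vertices to every edge of $J$. Because $J$ is $K_{k+1}^k$-free, any homomorphism of $\text{Ex}^r(K_{k+1}^k)$ into $H$ must collapse its entire $(k+1)$-vertex core into a single $r$-edge of $H$ (a ``spread'' embedding into $V(J)$ would produce a forbidden $K_{k+1}^k$-subgraph). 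Counting such maps via the standard identity
\[\hom(\text{Ex}^r(F), H) = \sum_{\phi_0: V(F) \to V(H)} \prod_{e \in F} d_H(\phi_0(e)),\]
where $d_H(S)$ denotes the codegree of a $k$-set $S$ in $H$, and balancing against $t_{K_r^r}(H) \asymp e(H)/v(H)^r$ after optimizing $M$, should give the tight exponent $1/(r-k)$.

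For the extension to $F \supseteq K_{k+1}^k$, I would use the same host $H$ and add the extra edges of $F$ one at a time. For each extra edge $e^+$ sharing $j$ of its $k$ vertices with the already-placed vertices, a Sidorenko-style averaging over the $k$-shadow of $H$ gives
\[\sum_{\phi_+} d_H(\phi(e^+)) \asymp \frac{e(H)}{v(H)^j},\]
where $\phi_+$ ranges over extensions of $\phi_0$ to the $k-j$ new core vertices of $e^+$; this uses the near-regularity of the codegree function $d_H^{(j)}(\cdot)$ on $H$. Dividing by $v(H)^{r-j}$ to account for the $r-j$ new vertices (core plus expansion) introduced into $\text{Ex}^r(F)$ produces a factor of $t_{K_r^r}(H)$ per extra edge, and iterating over the $e(F)-(k+1)$ added edges yields
\[t_{\text{Ex}^r(F)}(H) \lesssim t_{\text{Ex}^r(K_{k+1}^k)}(H) \cdot t_{K_r^r}(H)^{e(F) - (k+1)},\]
which combined with the base case gives the claim.

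The hard part will be the quantitative analysis in the base case: since by \Cref{cor:upperBounds}(a) the exponent $1/(r-k)$ is already tight, the construction must be delicately chosen and the dominant contribution to $\hom(\text{Ex}^r(K_{k+1}^k), H)$ carefully extracted, including ``mixed'' embeddings where some $K_{k+1}^k$-core vertices map to $V(J)$ and others to $U$. By contrast, the extension step is essentially routine once a sufficiently regular base-case construction is identified.
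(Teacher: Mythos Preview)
Your proposed host constructions do not give the claimed bound; this is a genuine gap, not a detail to be filled in later.

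Take $k=2$, $r=3$, and try the ``attach a fixed vertex $u$'' construction with $J=K_{M,M}$.  Then $H$ has $n\approx 2M$ vertices and $e(H)=M^2$, so $t_{K_3^3}(H)\asymp M^{-1}$.  You are correct that the core of any homomorphism $C_3^3\to H$ must land in a single edge $\{u,a,b\}$, but this is not enough: the three expansion vertices of $C_3^3$ are only constrained to lie in edges of $H$ containing $\{u,a\}$ or $\{u,b\}$, and each such $2$-set has codegree $\deg_J(a)\approx M$.  A direct count gives $\hom(C_3^3,H)\gtrsim M^4$, hence $t_{C_3^3}(H)\gtrsim M^{-2}\approx t_{K_3^3}(H)^{2}$, i.e.\ this $H$ witnesses $s\le -1$, not $s\ge 1$.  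The alternative $H=\Ex^r(J)$ has the opposite defect: any two edges meet in at most $k-1$ vertices, but now $e(H)=\Theta(v(H))$, and one checks that this sparsity again yields only $s<0$.

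What is actually needed is an $n$-vertex $r$-graph $H$ with $n^{k-o(1)}$ edges in which \emph{every} $k$-set lies in at most one edge and which contains no copy of $\Ex^r(K_{k+1}^k)$.  These two properties together force every homomorphism of $\Ex^r(K_{k+1}^k)$ into $H$ to collapse \emph{entirely} (core and expansion vertices alike) onto a single edge of $H$, giving $\hom(\Ex^r(K_{k+1}^k),H)=O(e(H))$, which is precisely the tight count.  The existence of such $H$ is a deep fact: for $k=2$ it is the Ruzsa--Szemer\'edi construction, and for general $k$ it is due to Gowers and Janzer; it cannot be obtained by expanding any $K_{k+1}^k$-free $k$-graph.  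For the extension to general $F\supseteq K_{k+1}^k$, the paper likewise does not use regularity or averaging (the Gowers--Janzer host is far from regular); instead it fixes one vertex $x$ of $F$ lying in a $K_{k+1}^k$ and uses the bounded-codegree property once more to show that any map on $V(F)\setminus\{x\}$ has only $O(1)$ extensions to a homomorphism of $\Ex^r(F)$.
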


The proof of Theorem~\ref{thm:containKk} makes use of a construction of Gowers and Janzer~\cite{gowers2021generalizations}, which generalized the seminal construction of Ruzsa and Szemer\'edi\cite{ruzsa1978}. We note that the quantitative lower bound of \Cref{thm:containKk} combined with \Cref{thm:main} gives optimal lower bounds for the random Tur\'an number $\ex(G_{n,p}^r,\Ex^r(K_{k+1}^k))$; see \cite{nie2023random,nie2021triangle}.  Moreover, this result together with \Cref{cor:upperBounds}(a) gives the following.
\begin{cor}\label{cor:Simplex}
    For $r>k\ge 2$, we have
    \[s(\Ex^r(K_{k+1}^k))=\frac{1}{r-k}.\]
\end{cor}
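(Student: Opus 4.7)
The plan is to derive the equality by combining two matching bounds on $s(\Ex^r(K_{k+1}^k))$ that have already been established elsewhere in the paper. Concretely, I want to observe that one inequality is a direct consequence of the general upper bound for expansions of complete $k$-graphs on $k+1$ vertices, while the other is the quantitative non-Sidorenko statement just proved.

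For the upper bound $s(\Ex^r(K_{k+1}^k))\le \frac{1}{r-k}$, I would simply invoke \Cref{cor:upperBounds}(a). That statement is listed precisely as the bound $s(\Ex^r(K_{k+1}^k))\le\frac{1}{r-k}$, so no additional work is needed: it follows from \Cref{thm:main} applied with $F=\Ex^r(K_{k+1}^k)$ together with the known upper bounds on the random Tur\'an number $\ex(G_{n,p}^r,\Ex^r(K_{k+1}^k))$ cited in the paper.

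For the matching lower bound $s(\Ex^r(K_{k+1}^k))\ge \frac{1}{r-k}$, I would apply \Cref{thm:containKk} with the choice $F=K_{k+1}^k$. The hypothesis of \Cref{thm:containKk} requires that $F$ contain $K_{k+1}^k$ as a subgraph, which is trivially satisfied since $F=K_{k+1}^k$ itself. The conclusion then yields exactly $s(\Ex^r(K_{k+1}^k))\ge \frac{1}{r-k}$.

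Combining these two inequalities gives the claimed equality $s(\Ex^r(K_{k+1}^k))=\frac{1}{r-k}$. There is no real obstacle here since both ingredients are already established; the only content of the corollary is recognizing that the upper bound from \Cref{cor:upperBounds}(a) and the lower bound from \Cref{thm:containKk} happen to match when $F=K_{k+1}^k$, thereby pinning down $s(F)$ exactly in this family.
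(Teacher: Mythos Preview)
Your proposal is correct and matches the paper's own argument exactly: the corollary is stated immediately after \Cref{thm:containKk} with the remark that ``this result together with \Cref{cor:upperBounds}(a)'' yields it, which is precisely the combination you describe.
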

Corollary~\ref{cor:Simplex}, in other words, says that the minimum $t_{\Ex^r(K_{k+1}^k)}(H)$ among all $r$-graph $H$ with edge density $\delta$ is $\delta^{k+1+\frac{1}{r-k}\pm o(1)}$. For $\Ex^3(K_{3})=C_3^3$ this result was proved previously by Fox, Sah, Sawhney, Stoner, and Zhao~\cite[Theorem 1.2]{fox2020triforce}.  We note that the $r=k+1$ case of this corollary gives an example where the general upper bound of \Cref{cor:General} is tight. 

Finally, we establish an upper bound on $s(\Ex^r(F))$ in terms of $s(F)$.

\begin{thm}\label{thm:expansions}
    If $F$ is a $k$-graph with $s(F)<\infty$, then for all $r\ge k$ we have
    \[s(\Ex^r(F))\le \frac{v(F)-k}{v(F)-k+(r-k)(s(F)+e(F)-1)}\cdot s(F). \]
\end{thm}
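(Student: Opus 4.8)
We need to bound $s(\Ex^r(F))$ from above given $s(F) < \infty$. By definition of $s(\Ex^r(F))$, we must show that whenever an $r$-graph $H$ satisfies $t_{\Ex^r(F)}(H) = t_{K_r^r}(H)^{s + e(\Ex^r(F))}$ with $t_{K_r^r}(H) > 0$, then $s$ is at most the claimed bound. Note $e(\Ex^r(F)) = e(F)$ since expansion preserves the edge count.

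**The plan.**

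The plan is to take an arbitrary $r$-graph $H$ witnessing a large value of $s$ for $\Ex^r(F)$, and use it to construct an auxiliary $k$-graph $H'$ (a weighted or "blown-up" structure, or more precisely a $k$-graph/kernel) for which $t_F(H')$ is small relative to $t_{K_k^k}(H')^{e(F)}$, thereby contradicting the definition of $s(F)$ if $s$ is too large. The key is to relate homomorphism counts of the expansion into $H$ to homomorphism counts of $F$ into a derived $k$-uniform object.

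**Key steps.**

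First I would recall the standard identity for homomorphism densities into expansions: a homomorphism from $\Ex^r(F)$ to $H$ consists of a map on the "core" vertices (those of $F$) together with, for each edge $e$ of $F$, a choice of $r-k$ additional vertices of $H$ completing the image of the core edge to an edge of $H$. Writing $d_H(S)$ for the number of ways to extend a $k$-set $S \subseteq V(H)$ to an edge of $H$ (i.e. the codegree), one gets roughly
\[
t_{\Ex^r(F)}(H) = \frac{1}{v(H)^{v(F) + (r-k)e(F)}} \sum_{\phi: V(F)\to V(H)} \prod_{e \in E(F)} \bigl( (r-k)!\, d_H(\phi(e)) \bigr) \cdot [\text{ordering factors}].
\]
This means that if we define a symmetric function $W(S) = d_H(S)/v(H)^{r-k}$ on $k$-sets — essentially a weighted $k$-graph $H'$ on the same vertex set — then $t_{\Ex^r(F)}(H)$ is, up to constants and $v(H)$ powers, the weighted homomorphism count $t_F(H')$ where edges carry weight $W$. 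Second, I would compute $t_{K_r^r}(H) = e(H)\cdot r!/v(H)^r$, which equals (up to constants) the total weight $\sum_S W(S)/v(H)^k = t_{K_k^k}(H')$. Third, substituting into the hypothesis $t_{\Ex^r(F)}(H) = t_{K_r^r}(H)^{s + e(F)}$ and using that $t_F(H') \geq t_{K_k^k}(H')^{s(F) + e(F)}$ (the defining inequality for $s(F)$, applied to the weighted $k$-graph — one must check the weighted version still follows, or approximate $H'$ by an unweighted blow-up), one obtains an inequality between powers of $t_{K_r^r}(H)$ and powers of a quantity bounded in terms of $v(H)$. The exponents must then be reconciled: tracking the powers of $v(H)$ on both sides, and using $t_{K_r^r}(H) \le n^{-(r-k)}$-type bounds coming from the fact that each $k$-set extends to at most $v(H)^{r-k}$ edges, yields the claimed algebraic relation $s \le \frac{v(F)-k}{v(F)-k+(r-k)(s(F)+e(F)-1)} s(F)$ after solving for $s$.

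**The main obstacle.**

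The hard part will be Step 3: making the passage from the weighted $k$-graph $H'$ back to the definition of $s(F)$ rigorous, since $s(F)$ is defined via the unweighted (but arbitrarily large) $r$-graphs — I expect one must either (a) verify that the Sidorenko-type inequality $t_F(H') \ge t_{K_k^k}(H')^{s(F)+e(F)}$ extends to weighted $k$-graphs with weights in $[0,1]$ by a limiting/blow-up argument (replacing $H'$ with a genuine $k$-graph on $N \cdot v(H)$ vertices where a $k$-set is an edge with the appropriate density), and (b) carefully track how the extra $v(H)$ powers introduced by the extension vertices interact with the optimization defining $s$, since the supremum in $s(\Ex^r(F))$ is over all $H$ and we need the bound uniformly. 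A secondary technical point is handling the ordering/automorphism constants relating $\hom$ counts to codegree products, and ensuring $t_{K_r^r}(H) > 0$ transfers to $t_{K_k^k}(H') > 0$; these are routine but must be done cleanly to avoid degenerate cases. I would also need to optimize the choice of how "spread out" the codegrees of $H$ are — the extremal $H$ for $\Ex^r(F)$ should essentially be an extension of the extremal $H'$ for $F$ with codegrees chosen to balance the two sides, and confirming this balance gives exactly the stated exponent is where the precise fraction comes from.
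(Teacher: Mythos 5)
Your auxiliary weighted $k$-graph $W(S) = \deg_H(S)/v(H)^{r-k}$ and the identity relating $\hom(\Ex^r(F),H)$ to a weighted homomorphism count of $F$ into $W$ are exactly the paper's construction (the paper normalizes with a factor $(r-k)!$, immaterial), and your plan to apply the weighted/limit version of the defining inequality $t_F(W)\ge t_{K_k^k}(W)^{s(F)+e(F)}$ is also what the paper does. But the closing step is described with the wrong ingredient, and this is a genuine gap, not a cosmetic one.

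After the substitution, what comes out is (in the paper's notation, with $n=v(H)$)
\[
t_{\Ex^r(F)}(H)\;\ge\;t_{K_r^r}(H)^{s(F)+e(F)}\cdot n^{(r-k)s(F)},
\]
and to beat the target $t_{K_r^r}(H)^{s+e(F)}$ one needs a \emph{lower} bound on $t_{K_r^r}(H)$ in terms of a negative power of $n$, so that $n^{(r-k)s(F)}$ can absorb $t_{K_r^r}(H)^{s-s(F)}$ with $s<s(F)$. You instead invoke ``$t_{K_r^r}(H)\le n^{-(r-k)}$-type bounds coming from the fact that each $k$-set extends to at most $v(H)^{r-k}$ edges.'' This is both the wrong direction (an \emph{upper} bound on density) and not generally true (take $H$ of constant edge density). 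The bound that actually completes the argument is the one in \Cref{lem:trivBound}: since $F$ must be $k$-partite (else $s(F)=\infty$), $\Ex^r(F)$ is $r$-partite, and then the trivial homomorphisms of $\Ex^r(F)$ onto a single edge force $t_{\Ex^r(F)}(H)\ge e(H)/n^{v(\Ex^r(F))}$; combined with the assumed violation $t_{\Ex^r(F)}(H)<t_{K_r^r}(H)^{s+e(F)}$ this yields $t_{K_r^r}(H)\ge n^{-\frac{v(F)+(r-k)e(F)-r}{s+e(F)-1}}$, which is precisely what makes the exponents close. Without this lemma (or some equivalent use of $r$-partiteness to lower-bound the density of a hypothetical witness), the final algebra does not go through; the codegree observation only shows the weights of $W$ lie in $[0,1]$, which is needed to apply the weighted Sidorenko inequality but does not produce the stated fraction.
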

For example, one can check that knowing $s(\Ex^{k+1}(K_{k+1}^k))\le 1$ from \Cref{cor:upperBounds} together with \Cref{thm:expansions} implies $s(\Ex^{r}(K_{k+1}^k))\le \frac{1}{r-k}$ for all $r>k$, and similarly one recovers our upper bound for $C_{2\ell+1}^r$ assuming the upper bound for $r=3$.   We also obtain the following nice corollary by taking $s(F)=0$.
\begin{cor}\label{cor:Sidorenko}
    If $F$ is a Sidorenko $k$-graph, then its expansions $\Ex^r(F)$ are Sidorenko for all $r\ge k$.
\end{cor}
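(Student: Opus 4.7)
My plan is to obtain \Cref{cor:Sidorenko} as a direct specialization of \Cref{thm:expansions}. The one ingredient I will need is the equivalence recorded in the paper that an $r$-graph $G$ is Sidorenko if and only if $s(G) = 0$. I would first verify this: one direction is immediate from the definition, since Sidorenko-ness forces the implicit exponent $s$ satisfying $t_G(H) = t_{K_r^r}(H)^{s + e(G)}$ to be nonpositive for every valid $H$ (using that $t_{K_r^r}(H) \in (0,1)$ for every simple $r$-graph $H$ with $v(H) \ge r$), and the reverse direction follows because typical random hypergraphs $H = G_{n,p}^r$ make both $t_G(H)$ and $t_{K_r^r}(H)^{e(G)}$ asymptotically equal, forcing $s(G) \ge 0$ for every $G$.

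Given this, the corollary follows in two lines. The case $r = k$ is trivial since $\Ex^k(F) = F$. For $r > k$, I would invoke \Cref{thm:expansions} with the hypothesis $s(F) = 0 < \infty$, which gives
\[s(\Ex^r(F)) \le \frac{v(F)-k}{v(F)-k+(r-k)(e(F)-1)} \cdot 0 = 0.\]
Combined with the general lower bound $s(\Ex^r(F)) \ge 0$, this forces $s(\Ex^r(F)) = 0$, equivalently $\Ex^r(F)$ is Sidorenko.

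I do not anticipate any genuine obstacle here, because the corollary is literally the $s(F) = 0$ slice of \Cref{thm:expansions}. The only bookkeeping care in writing it up is to make the Sidorenko $\iff s(G) = 0$ equivalence fully explicit, so that the hypothesis and conclusion of \Cref{thm:expansions} line up cleanly with the input and output of \Cref{cor:Sidorenko}. All the real work has already been absorbed into the proof of \Cref{thm:expansions}.
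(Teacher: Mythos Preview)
Your proposal is correct and is exactly the paper's approach: the paper states the corollary immediately after \Cref{thm:expansions} with the sentence ``We also obtain the following nice corollary by taking $s(F)=0$,'' and the equivalence $s(G)=0 \iff G$ is Sidorenko is already recorded in the paper right after the definition of $s(F)$. Your write-up just makes explicit the two-line derivation the paper leaves implicit.
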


\section{Proof of \Cref{thm:main} and its Corollaries}\label{sec:main}
Our proof of \Cref{thm:main} is based off the proof of \Cref{thm:CLS} from \cite{conlon2023extremal} which relies on the tensor product trick.  Given two $r$-graphs $G,H$, we define the \textit{tensor product} $G\otimes H$ to be $r$-graph on $V(G)\times V(H)$ where $((x_1,y_1),\ldots,(x_r,y_r))\in E(G\otimes H)$ if and only if $(x_1,\ldots,x_r)\in E(G)$ and $(y_1,\ldots,y_r)\in E(H)$.  For $N$ a positive integer, we define the $N$-fold tensor product $H^{\otimes N}$ inductively by setting $H^{\otimes 1}=H$ and $H^{\otimes N}=H\otimes H^{\otimes(N-1)}$.  The key property we need regarding tensor products is the fact that for any $r$-graphs $F,H$ and $N\ge 1$, we have
\[t_F(H^{\otimes N})=t_F(H)^N,\]
which is straightforward to verify.

By incorporating the tensor product trick from \cite{conlon2023extremal} together with random homomorphisms, we can show that $r$-graphs $G$ with few copies of $F$ have large $F$-free subgraphs; by copies of $F$ we mean subgraphs of $G$ that are isomorphic to $F$. To this end, we let $\c{N}_F(G)$ denote the number of copies of $F$ in $G$ and recall that $\ex(G,F)$ is the maximum number of edges in an $F$-free subgraph of $G$.

\begin{lem}\label{lem:relative}
    If $F$ is an $r$-graph such that there exists an $r$-graph $H$ with $t_{K_r^r}(H)=\al$ and $t_{F}(H)=\al^{s+e(F)}$, then for all $r$-graphs $G$ and integers $N\ge 1$ we have
    \[\ex(G,F)\ge \al^N e(G)-\al^{(s+e(F))N} \c{N}_F(G).\]
\end{lem}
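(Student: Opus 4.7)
The plan is to apply the tensor product trick together with a single-round random homomorphism to produce a large $F$-free subgraph of $G$. First I would form $\Hh := H^{\otimes N}$, whose key properties (already noted in the paragraph preceding the lemma) are $t_{K_r^r}(\Hh) = t_{K_r^r}(H)^N = \al^N$ and $t_F(\Hh) = t_F(H)^N = \al^{(s+e(F))N}$. The idea is that these two homomorphism densities will control, respectively, the expected number of edges and the expected number of copies of $F$ surviving in the subgraph we are about to construct.

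Next I would sample a map $\phi: V(G) \to V(\Hh)$ uniformly at random and let $G' \subseteq G$ be the spanning subgraph whose edge set is $\{e \in E(G) : \phi(e) \in E(\Hh)\}$. Since for each edge $e \in E(G)$ the restriction $\phi|_e$ is uniformly random, $\Pr[e \in E(G')] = t_{K_r^r}(\Hh) = \al^N$, and linearity of expectation gives $\E[e(G')] = \al^N e(G)$. Similarly, for each copy of $F$ in $G$ the restriction of $\phi$ to its vertex set is a uniformly random map into $V(\Hh)$, so the probability that every edge of that copy is sent into $E(\Hh)$ equals $t_F(\Hh) = \al^{(s+e(F))N}$. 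Since any copy of $F$ in $G'$ is in particular a copy of $F$ in $G$ all of whose edges survive under $\phi$, I obtain $\E[\c{N}_F(G')] \le \al^{(s+e(F))N} \c{N}_F(G)$.

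Combining these two expectations, there must exist some realization of $\phi$ for which $e(G') - \c{N}_F(G') \ge \al^N e(G) - \al^{(s+e(F))N} \c{N}_F(G)$. Fixing such a $\phi$ and deleting one edge from each copy of $F$ in $G'$ produces an $F$-free subgraph of $G$ with at least the claimed number of edges, establishing the lemma. I expect the only mild subtlety to be bookkeeping: making sure the convention for $\c{N}_F$ (labeled versus unlabeled copies) is used consistently on both sides of the inequality, and that the tuple ordering implicit in checking $\phi(e) \in E(\Hh)$ matches the one used in the definition of $\hom(F,H)$. Both are standard, so beyond these routine checks the proof is essentially a one-line application of linearity of expectation on top of the tensor product identity $t_F(H^{\otimes N}) = t_F(H)^N$.
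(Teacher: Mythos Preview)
Your proposal is correct and follows essentially the same approach as the paper: both take a uniformly random map $\phi:V(G)\to V(H^{\otimes N})$, keep the edges of $G$ that $\phi$ sends to edges of $H^{\otimes N}$, compute $\E[e(G')]=\al^N e(G)$ and $\E[\c{N}_F(G')]\le \al^{(s+e(F))N}\c{N}_F(G)$ by linearity, and then delete one edge from each surviving copy of $F$. The only cosmetic difference is that the paper writes equality for $\E[\c{N}_F(G')]$ and takes the expectation of $e(G'')$ directly, whereas you first fix a good realization; both arguments are the same deletion-after-random-homomorphism idea.
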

\begin{proof}
    Let $\phi:V(G)\to V(H^{\otimes N})$ be chosen uniformly at random, and let $G'\sub G$ be the subgraph consisting of all hyperedges $e\in G$ which are mapped bijectively onto an edge of $H^{\otimes N}$. By linearity of expectation, we have \[\E[e(G')]=t_{K_r^r}(H^{\otimes N}) \cdot e(G)=\al^N \cdot e(G),\] and \[\E[\c{N}_F(G')]=t_{F}(H^{\otimes N})\cdot \c{N}_F(G)= \al^{(s+e(F))N} \cdot \c{N}_F(G).\]  Thus if we define $G''\sub G'$ by deleting an edge from each copy of $F$ in $G'$, then $G''$ is $F$-free and satisfies
    \[\E[e(G'')]\ge \al^N e(G)-\al^{(s+e(F))N} \c{N}_F(G),\]
    and hence there must exist an $F$-free subgraph of $G$ with at least this many edges, proving the result.
\end{proof}
With this we can prove our main result.
\begin{proof}[Proof of \Cref{thm:main}]
    Recall that we wish to prove if $F$ is an $r$-graph with $e(F)\ge 2$ and $\frac{v(F)-r}{e(F)-1}<r$, then for any $p=p(n)\ge n^{-\frac{v(F)-r}{e(F)-1}}$, we have a.a.s.
    \[\ex(G_{n,p}^r,F)\ge n^{r-\frac{v(F)-r}{e(F)-1}-o(1)}(p n^{\frac{v(F)-r}{e(F)-1}})^{\frac{s(F)}{e(F)-1+s(F)}}.\]
    If $s(F)=0$ then this result follows by a standard random deletion argument, so from now on we assume $s(F)>0$.
    
    Consider any $0<\ep\le s(F)$ (which exists by assumption $s(F)>0$).  By definition of $s(F)$, there exists a non-empty $r$-graph $H$ with $t_{K_r^r}(H)=\al>0$ and $t_{F}(H)=\al^{s+e(F)}$ with $0\le s(F)-\ep\le s\le s(F)$. By \Cref{lem:relative} we find
    \begin{equation}\ex(G_{n,p}^r,F)\ge \al^N e(G_{n,p}^r)-\al^{(s+e(F))N} \c{N}_F(G_{n,p}^r),\label{eq:relative}\end{equation}
    so it suffices to choose an $N$ such that this is sufficiently large a.a.s.

    Given $p$ and any function $\del(n)=o(1)$, let $N\ge 1$ be the smallest integer such that
    \[q:=\del(n)n^{-\frac{v(F)-r}{e(F)-1+s}}p^{-\frac{e(F)-1}{e(F)-1+s}}\ge \al^N.\]
    Note that such an integer exists since $0<\al<1$. Also note that $\al^N\le q\le \al^{N-1}$ by the minimality of $N$.
    
    Let $A$ denote the event that $e(G_{n,p}^r)\ge \frac{1}{2r!} pn^r$.  Because $e(G_{n,p}^r)$ is a binomial random variable and $pn^r\ge n^{r-\frac{v(F)-r}{e(F)-1}}\to \infty$ by hypothesis, the Chernoff bound implies that the event $A$ holds a.a.s.
    
    Let $B$ denote the event that $\c{N}_F(G_{n,p}^r)\le \del(n)^{-1/2} p^{e(F)} n^{v(F)}$. Since $\E[\c{N}_F(G_{n,p}^r)]\le p^{e(F)} n^{v(F)}$, it follows by Markov's inequality and $\del(n)=o(1)$ that $B$ holds a.a.s.
    
    Because $A\cap B$ hold a.a.s., we find that a.a.s.\ the bound in \eqref{eq:relative} is at least
    \begin{align*}\frac{1}{2r!} \al^N  pn^r-\del(n)^{-1/2}\al^{(s+e(F))N} p^{v(F)}n^{e(F)}&\ge \frac{1}{2r!} \al^N  pn^r(1-2r!\del(n)^{-1/2}q^{s+e(F)-1} p^{e(F)-1}n^{v(F)-r})\\&=\frac{1}{2r!} \al^N  pn^r(1-2r!\del(n)^{s+e(F)-3/2}).\end{align*}
    Note that $s+e(F)-3/2>0$ since $s\ge 0$ and $e(F)\ge 2$.  Thus for $n$ sufficiently large the quantity above is at least 
    \begin{align*}\frac{1}{4r!} \al^N  pn^r&\ge \frac{\al}{4r!} \del(n) n^{r-\frac{v(F)-r}{e(F)-1}}(p n^{\frac{v(F)-r}{e(F)-1}})^{\frac{s}{e(F)-1+s}}\\ &\ge \frac{\al}{4r!} \del(n) n^{-\frac{\ep}{e(F)-1+s(F)}}\cdot n^{r-\frac{v(F)-r}{e(F)-1}}(p n^{\frac{v(F)-r}{e(F)-1}})^{\frac{s(F)}{e(F)-1+s(F)}},\end{align*}
    with this last step used $s\ge s(F)-\ep$.  As $\ep>0$ was arbitrary and $\del(n)$ tends to 0 arbitrarily slowly, we conclude the desired result.
\end{proof}
As an aside, the bound of \Cref{thm:main} continues to hold in expectation even if $\frac{v(F)-r}{e(F)-1}\ge r$.  However, in this case we can not say $G_{n,p}^r$ has any edges a.a.s., and hence no non-trivial lower bound for $\ex(G_{n,p}^r,F)$ can hold a.a.s.

Focusing on the $p=1$ case, \Cref{lem:relative} quickly gives the following.
\begin{cor}\label{cor:classicTuran}
    If $F$ is an $r$-graph such that there exists a non-empty $r$-graph $H$ with $t_F(H)=t_{K_r^r}(H)^{s+e(F)}$, then
    \[\ex(n,F)=\Om\left(n^{r-\frac{v(F)-r}{e(F)-1}+\frac{(v(F)-r)s}{(e(F)-1)(s+e(F)-1)}}\right).\]
\end{cor}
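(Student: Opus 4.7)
The plan is to apply \Cref{lem:relative} with $G = K_n^r$, the complete $r$-uniform hypergraph on $n$ vertices, for which $e(G) = \binom{n}{r} = \Theta(n^r)$ and $\c{N}_F(G) \le n^{v(F)}$. The lemma then yields, for every integer $N \ge 1$,
\[\ex(n,F) \ge \ex(K_n^r, F) \ge c\, \al^N n^r - \al^{(s+e(F))N} n^{v(F)},\]
where $c > 0$ depends only on $r$ and $\al = t_{K_r^r}(H) \in (0,1)$; here $\al < 1$ because $r!\binom{v(H)}{r} < v(H)^r$ for $r \ge 2$, and $\al > 0$ since $H$ is non-empty.

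The remaining task is to optimize over $N$. Balancing the two terms in the bound suggests choosing $\al^{(s+e(F)-1)N}$ of order $n^{r-v(F)}$, equivalently $\al^N$ of order $n^{-(v(F)-r)/(s+e(F)-1)}$. Since $\al$ is a fixed constant in $(0,1)$, taking $N$ to be the smallest integer for which $\al^N$ drops below a suitably small constant multiple of this target makes the negative term at most half the positive term, and we obtain
\[\ex(n,F) = \Om\bigl(\al^N n^r\bigr) = \Om\bigl(n^{\,r - (v(F)-r)/(s+e(F)-1)}\bigr).\]

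A direct algebraic manipulation verifies the identity
\[r - \frac{v(F)-r}{s+e(F)-1} = r - \frac{v(F)-r}{e(F)-1} + \frac{(v(F)-r)\,s}{(e(F)-1)(s+e(F)-1)},\]
which puts the exponent into the form claimed in the corollary. I do not foresee any real obstacle: this is essentially the $p=1$ specialization of the proof of \Cref{thm:main}, with $G_{n,p}^r$ replaced by $K_n^r$ and the concentration steps (Chernoff and Markov) rendered unnecessary. The only minor nuisance is rounding $N$ to an integer, which changes the leading constant by at most a factor of $\al^{-1}$.
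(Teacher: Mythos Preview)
Your proposal is correct and follows essentially the same approach as the paper: apply \Cref{lem:relative} with $G=K_n^r$, use $e(G)=\Theta(n^r)$ and $\c{N}_F(G)\le n^{v(F)}$, and choose $N$ so that $\al^N$ is on the order of $n^{-(v(F)-r)/(s+e(F)-1)}$. Your added remarks on why $\al\in(0,1)$ and on the integer rounding of $N$ are helpful details the paper leaves implicit.
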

\begin{proof}
    Take $G=K_n^r$ in \Cref{lem:relative}, which means $e(G)\ge \frac{1}{2 r!} n^r$ and $\c{N}_F(T)\le n^{v(F)}$, so for $\al=t_{K_r^r}(H)$ and any $N\ge 1$ we have
    \[\ex(n,F)=\ex(G,F)\ge \frac{1}{2r!}\al^N n^r\left(1-2r! \al^{(s+e(F)-1)N}n^{v(F)-r}\right).\]
    We conclude the result by taking $N$ such that $\al^N$ is a sufficiently small constant times $n^{-\frac{v(F)-r}{s+e(F)-1}}=n^{(v(F)-r)\left(\frac{-1}{e(F)-1}+\frac{s}{(e(F)-1)(s+e(F)-1)}\right)}$.
\end{proof}
As a point of comparison, a more careful analysis of the proof giving \Cref{thm:CLS} yields the following quantitative bound.
\begin{thm}[Quantitative \Cref{thm:CLS}]\label{thm:CLSquant}
    If $F$ is an $r$-graph such that there exists a non-empty $r$-graph $H$ with $t_F(H)=t_{K_r^r}(H)^{s+e(F)}$ and $t_{K_r^r}(H)=v(H)^{-\del}$, then
    \[\ex(n,F)=\Om\left(n^{r-\frac{v(F)-r}{e(F)-1}+\frac{\del s}{e(F)-1}}\right).\]
\end{thm}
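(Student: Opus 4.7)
The plan is to refine the proof of Corollary~\ref{cor:classicTuran} by applying random edge deletion plus alteration directly to the tensor product $H^{\otimes N}$, rather than to $K_n^r$. The key observation is that the deletion probability $\lambda$ provides a continuous parameter to balance edges against copies of $F$, which is sharper than the discrete balancing over $N$ used in Corollary~\ref{cor:classicTuran}; this extra flexibility yields the additional factor $n^{\delta s/(e(F)-1)}$ in the exponent.

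For $N \ge 1$, set $n_0 := v(H)^N$. The hypotheses $t_F(H) = \alpha^{s+e(F)}$ and $\alpha = v(H)^{-\delta}$ give
\[
e(H^{\otimes N}) = e(H)^N = \frac{\alpha^N n_0^r}{(r!)^N} = \frac{n_0^{r-\delta}}{(r!)^N}, \qquad \c{N}_F(H^{\otimes N}) \le \hom(F, H^{\otimes N}) = n_0^{v(F)-\delta(s+e(F))}.
\]
Retain each edge of $H^{\otimes N}$ independently with probability $\lambda$ and then delete one edge from every remaining copy of $F$: the resulting $F$-free subhypergraph has expected edge count at least $\lambda \, e(H^{\otimes N}) - \lambda^{e(F)} \c{N}_F(H^{\otimes N})$. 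Balancing the two terms via $\lambda^{e(F)-1} \approx n_0^{\delta(s+e(F)-1)-(v(F)-r)}/(r!)^N$ and simplifying produces a surviving count of order
\[
n_0^{\,r - (v(F)-r)/(e(F)-1) + \delta s/(e(F)-1)} \cdot (r!)^{-Ne(F)/(e(F)-1)},
\]
whose exponent in $n_0$ is exactly the quantitative exponent in the statement. For an arbitrary target $n$, choose $N$ so that $n_0 = v(H)^N \in [n/v(H), n]$ and pad with isolated vertices to reach $n$ vertices, transferring the estimate to $\mathrm{ex}(n, F)$.

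The main technical obstacle is the $(r!)^{Ne(F)/(e(F)-1)}$ prefactor: since $N = \Theta(\log n_0)$, it is a polynomial correction $n_0^{c}$ with $c = \Theta(\log r!/\log v(H))$, which would shift the exponent away from the target. I would resolve this by replacing $H$ throughout by its tensor power $H^{\otimes k}$ for a large constant $k$; this preserves both $\delta$ and $s$ (since $\alpha^k = (v(H)^k)^{-\delta}$ and $t_F$ and $t_{K_r^r}$ scale the same way under tensor powers), but inflates $v(H)$ to $v(H)^k$, forcing $c = o(1)$ as $k \to \infty$. The remaining feasibility constraint $\lambda \le 1$ becomes, after this inflation, $\delta(s+e(F)-1) \le v(F)-r$, which is the regime in which the method produces the claimed exponent.
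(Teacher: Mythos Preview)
Your overall plan---work directly on the tensor power $H^{\otimes N}$ on $n_0=v(H)^N$ vertices, keep each edge with probability $\lambda$, delete one edge per surviving copy of $F$, then pad to $n$ vertices---is exactly the Conlon--Lee--Sidorenko argument the paper is quoting here (the paper does not spell out a proof of this theorem, only alludes to it), so the approach is correct.

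The error is in your edge count for tensor powers. Edges do \emph{not} multiply as $e(G\otimes H)=e(G)e(H)$: an $r$-set $\{x_1,\dots,x_r\}\in E(G)$ can be matched with an $r$-set $\{y_1,\dots,y_r\}\in E(H)$ in $r!$ different orderings, each giving a distinct edge of $G\otimes H$. Equivalently, from $t_{K_r^r}(H^{\otimes N})=\alpha^N$ and $t_{K_r^r}(\cdot)=r!\,e(\cdot)/v(\cdot)^r$ one gets
\[
e(H^{\otimes N})=\frac{\alpha^N n_0^{\,r}}{r!}=\frac{n_0^{\,r-\delta}}{r!},
\]
a \emph{single} factor of $r!$, not $(r!)^N$. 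So your ``main technical obstacle'' does not exist, and the device of replacing $H$ by $H^{\otimes k}$ to inflate $v(H)$ is unnecessary: once you correct this, your balancing already yields the exponent $r-\frac{v(F)-r}{e(F)-1}+\frac{\delta s}{e(F)-1}$ with only constant prefactors.

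Regarding the feasibility constraint $\lambda\le 1$: as you note, it is equivalent to $\delta(s+e(F)-1)\le v(F)-r$, and the paper remarks immediately after the theorem statement that this always holds in the setting of the theorem (with \Cref{lem:trivBound} supplying the formal argument for $r$-partite $F$). So no residual issue remains there either.
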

It is not difficult to show  that $\del\le  \frac{v(F)-r}{s+e(F)-1}$ in \Cref{thm:CLSquant} (see \Cref{lem:trivBound} for a formal proof).  If $\del$ obtains this maximum possible value then \Cref{thm:CLSquant} matches \Cref{cor:classicTuran}; otherwise \Cref{cor:classicTuran} does strictly better.

We now sketch how \Cref{thm:main} together with known results for random Tur\'an bounds implies Corollaries \ref{cor:General} and \ref{cor:upperBounds}.

\begin{proof}[Proof of \Cref{cor:General}]
    Recall that we wish to show that if $F$ is an $r$-graph with $\ex(n,F)=O(n^\al)$, then
    \[s(F)\le \frac{v(F)-\al}{r-\al}-e(F).\]
    Let $s$ be such that there exists a non-empty $r$-graph $H$ with $t_F(H)=t_{K_r^r}(H)^{s+e(F)}$.  By \Cref{cor:classicTuran}, we have
    \[\Om(n^{r-\frac{v(F)-r}{s+e(F)-1}})=\ex(n,F)=O(n^\al).\]
    This implies $r-\frac{v(F)-r}{s+e(F)-1}\le \al$, and rearranging gives $s\le \frac{v(F)-\al}{r-\al}-e(F)$.  As $s(F)$ is the supremum over all such $s$, we conclude the result.
\end{proof}

\begin{proof}[Proof of \Cref{cor:upperBounds}]
    Throughout we implicitly utilize the fact that every $F$ we consider is $r$-balanced and hence $m_r(F)=\frac{v(F)-r}{e(F)-1}$.
    
    We first show (a): for $r>k\ge 2$ that $s(\Ex^r(K_{k+1}^k))\le \frac{1}{r-k}$.  By \cite[Theorem 1.4]{nie2023random}, we have
    for $p= n^{-\frac{1}{m_r(\Ex^r(K_{k+1}^k))}+c}=n^{-r+k-\frac{1}{k}+c}$ with sufficiently small $c=c(k,r)>0$ that a.a.s.
    $$
    \ex(G^r_{n,p}, \Ex^r(K^k_{k+1}))= p^{\frac{1}{(r-k)k+1}}n^{k+o(1)},
    $$
    hence by Theorem~\ref{thm:main}
    $$
    \frac{s(\Ex^r(K^k_{k+1}))}{k+s(\Ex^r(K^k_{k+1}))}\le \frac{1}{(r-k)k+1},
    $$
    which implies the desired upper bound.

    For (b), by \cite[Theorem 1.8]{nie2023random}, we have 
    for $p=n^{-\frac{1}{m_r(C_{2\ell+1}^r)}+c}=n^{-r+1+\frac{1}{2\ell}+c}$ with sufficiently small $c=c(\ell,r)>0$ that a.a.s.
    $$
    \ex(G^r_{n,p}, C^r_{2\ell+1})\le p^{\frac{2\ell-1}{2\ell(r-1)-1}}n^{2+o(1)},4
    $$
    hence by Theorem~\ref{thm:main}
    $$
    \frac{s(C^r_{2\ell+1})}{2\ell+s(C^r_{2\ell+1})}\le \frac{2\ell-1}{2\ell(r-1)-1},
    $$
    which implies
    $$
    s(C^r_{2\ell+1})\le \frac{2\ell-1}{r-2}.
    $$

    For (c), by \cite[Theorem 1.5]{nie2023random}, we have $\ex(G_{n,p}^r,\Ex^r(T))=n^{k-1+o(1)}$ a.a.s.\ for $p=n^{-\frac{1}{m_r(\Ex^r(T))}+c}=n^{-r+k-1+c}$ with sufficiently small $c=c(k,r)>0$, which implies the bound.
    
    For (d), it was proven in \cite{mubayi2023random,nie2024turan} that $\ex(G_{n,p}^r,C_{2\ell}^r)=n^{1+\frac{1}{2\ell-1}+o(1)}$ a.a.s.\ for $p=n^{-\frac{1}{m_r(C_{2\ell}^r)}+c}=n^{-r+1+\frac{1}{2\ell-1}+c}$ with $c=c(\ell,r)>0$, from which the bound follows.
\end{proof}

\section{Proof of \Cref{thm:containKk}}
To prove our lower bounds on $s(\Ex^r(F))$ when $F$ contains $K_{k+1}^k$, we need the following construction of Gowers and Janzer~\cite{gowers2021generalizations}, which is a generalization of the seminal construction of Ruzsa and Szemer\'edi~\cite{ruzsa1978}.

\begin{thm}[\cite{gowers2021generalizations,ruzsa1978}]\label{theorem:GJ}
For $r>k\ge 2$ and $n\ge 1$, there exists a graph $G_{n,r,k}$ on $n$ vertices with the following two properties:
\begin{itemize}
    \item[(i)] It has $n^ke^{-O(\sqrt{\log n})}$ subgraphs isomorphic to $K_r$;
    \item[(ii)] For any $t$ with $k<t\le r$ and any subgraph $G_1$ isomorphic to $K_k$, if there exist a subgraph $G_2$ isomorphic to $K_t$ and a subgraph $G_3$ isomorphic to $K_r$ such that $G_1\subseteq G_2$ and $G_1\subseteq G_3$, then $G_2\subseteq G_3$.
\end{itemize}
\end{thm}
Property $(ii)$ is indeed slightly stronger than the original statement of Theorem 1.2 in~\cite{gowers2021generalizations} which states ``every $K_k$ is contained in at most one $K_r$". This strengthened property is observed by Nie in~\cite{nie2023random}. In fact, property $(ii)$ is inherently implied by the proof of Lemma 3.1 in~\cite{gowers2021generalizations}. To see this, we roughly explain the idea of constructing $G_{n,r,k}$:
\begin{itemize}
    \item[1.] Let $S_{k-1}\subseteq \R^k$ be the $(k-1)$-dimension unit sphere. Pick $r$ points $p_1,\dots, p_r\in S_{k-1}$ in general position.
    \item[2.] Randomly pick $r$ point sets $V_1,\dots, V_r\subseteq S_{d-1}\subseteq \R^d$, each of size $n/r$, where $d$ is some suitable integer depending on $n$. By deleting a small portion of clustering points, we can make sure that all these points are reasonably well separeated. These points form the vertex set of $G_{n,r,k}$
    \item[3.] For any $1\le i<j\le r$ and points $v_i\in V_i, v_j\in V_j$, they form an edge in $G_{n,r,k}$ if and only if $|\langle v_i,v_j\rangle-\langle p_i,p_j\rangle|<\epsilon$, where $\epsilon$ is some resonably small constant.
\end{itemize}
By design, any copy of $K_r$ in $G_{n,r,k}$ must be close to a configuration with angles determined by the points $p_1,\dots, p_r$. Note that once we fix $k$ points $v_1\in V_1,\dots, v_k\in V_k$, then for all $k+1\le i\le r$, $v_i$ is constrained to lie in a set with small diameter. Since all points are well separated, there are at most one choice for each $v_i$. This property guarantees property \emph{(ii)} in \Cref{theorem:GJ}—See~\cite {gowers2021generalizations} for detailed computations.

Consider an $r$-graph $H_{n,r,k}$ on $V(G_{n,r,k})$ whose edges are the vertex sets of copies of $K_r$ in $G_{n,r,k}$. The following properties of $H_{n,r,k}$ is proved in~\cite{nie2023random}. We include a proof for completeness.
\begin{prop}[Proposition 5.4,~\cite{nie2023random}]\label{proposition:GJ1}
For $r>k\ge 2$ and $n\ge1$, $H_{n,r,k}$ has the following properties:
\begin{itemize}
    \item[(i)] $e(H_{n,r,k})\ge n^ke^{(-O(\sqrt{\log n}))}$;
    \item[(ii)] Any two edges intersect in at most $k-1$ vertices;
    \item[(iii)] $H_{n,r,k}$ does not contain any subgraph isomorphic to $\Ex^r(K_{k+1}^{k})$.
\end{itemize}
\end{prop}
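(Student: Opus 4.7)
The plan is to derive all three parts of the proposition as direct consequences of Theorem~\ref{theorem:GJ} applied to $G_{n,r,k}$, using throughout the implicit fact (which follows from Theorem~\ref{theorem:GJ}(ii) together with the existence of many $K_r$'s) that every $K_r$ in $G_{n,r,k}$ is itself a maximal clique of size exactly $r$, so that property (ii) of Theorem~\ref{theorem:GJ} applies directly to such cliques.

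Part (i) is immediate from the definition of $H_{n,r,k}$: distinct $K_r$-subgraphs of $G_{n,r,k}$ give distinct $r$-edges of $H_{n,r,k}$, so the count is inherited directly from Theorem~\ref{theorem:GJ}(i). For part (ii), I would argue by contradiction. Suppose $e,e'$ are distinct edges of $H_{n,r,k}$ with $|e\cap e'|\ge k$. Then the two $K_r$-cliques in $G_{n,r,k}$ corresponding to $e$ and $e'$ are two distinct maximal cliques, each of size $r\le r$, containing the $K_k$ on any $k$-subset of $e\cap e'$, contradicting Theorem~\ref{theorem:GJ}(ii).

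The main substance is in part (iii). Suppose toward contradiction that $H_{n,r,k}$ contains a copy of $\Ex^r(K_{k+1}^k)$, with core vertices $V_0=\{v_1,\dots,v_{k+1}\}$ and hyperedges $e_1,\dots,e_{k+1}$ satisfying $e_i\cap V_0=V_0\sm\{v_i\}$, where the remaining $r-k$ vertices $P_i$ of each $e_i$ are pairwise disjoint and disjoint from $V_0$ (which is forced by the injectivity of the subgraph embedding). The key intermediate claim I would establish is that $V_0$ spans a $K_{k+1}$ in $G_{n,r,k}$: indeed, for any pair $v_j,v_\ell\in V_0$, picking any $i\in[k+1]\sm\{j,\ell\}$ (which exists since $k+1\ge 3$), the set $V_0\sm\{v_i\}$ is a $K_k$ in $G_{n,r,k}$ since it sits inside the $K_r$-clique $e_i$, and in particular the pair $v_jv_\ell$ is an edge. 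Consequently $V_0$ is contained in some maximal clique $M$ of $G_{n,r,k}$ of size in $[k+1,r]$. Then the $K_k$ subset $V_0\sm\{v_1\}$ is contained in two distinct maximal cliques of size $\le r$, namely $M$ (which contains $v_1$) and $e_1$ (which misses $v_1$, since $v_1\notin V_0\sm\{v_1\}$ and $v_1\notin P_1$ by injectivity), contradicting Theorem~\ref{theorem:GJ}(ii).

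The main obstacle is really the intermediate claim in part~(iii) that $V_0$ itself spans a clique in $G_{n,r,k}$; once this is established, producing the second maximal clique $M$ that witnesses the contradiction is routine. The minor recurring technical point, used in both (ii) and (iii), is that one must rely on (or separately note) that the clique number of $G_{n,r,k}$ is exactly $r$, so that every $K_r$ is a maximal clique of size $\le r$ falling under the scope of Theorem~\ref{theorem:GJ}(ii).
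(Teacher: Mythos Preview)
Your proposal is correct and follows essentially the same route as the paper: (i) is immediate, (ii) reduces to a $K_k$ lying in two distinct $K_r$'s, and for (iii) you show the core $V_0$ spans a $K_{k+1}$ and then exhibit a $K_k$ lying in both the maximal clique through $V_0$ and the distinct maximal clique $e_1$. The paper compresses your part (iii) into the single sentence ``there would be a copy of $K_k$ contained in a $K_{k+1}$ and a $K_r$ not containing the $K_{k+1}$,'' but the underlying argument---including the implicit use that every $K_r$ is maximal, which you rightly flag---is the same.
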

\begin{proof}
Property (i) follows immediately. Now assume, for contradiction, that there exist two edges in the hypergraph that share at least $k$ vertices. This would imply the presence of a $K_k$ subgraph lying in the intersection of two distinct $K_r$-subgraphs within $G_{n,r,k}$, violating condition (ii) of $G_{n,r,k}$. 

Likewise, suppose that $H_{n,r,k}$ contains a subhypergraph isomorphic to $\Ex^r(K_{k+1}^k)$. Then, in the underlying graph $G_{n,r,k}$, this would correspond to a $K_k$ subgraph that is a common part of a $K_{k+1}$ and a $K_r$ which does not include the $K_{k+1}$. This again contradicts property (ii) of $G_{n,r,k}$.
\end{proof}

Now we are ready to prove our main result for this section. 
\begin{proof}[Proof of \Cref{thm:containKk}]
Recall that we wish to prove that if $F$ is a $k$-graph which contains $K_{k+1}^k$ as a subgraph, then for all $r>k$ we have
\[s(\Ex^r(F))\ge \frac{1}{r-k}.\]
That is, for any $\ep>0$ we want to find an $r$-graph $H$ such that
\[t_{\Ex^r(F)}(H)\le t_{K_r^r}(H)^{e(F)+\frac{1}{r-k}-\ep}.\]

Let $H=H_{n,r,k}$ with $n$ to be chosen sufficiently large in terms of $\ep$.  The crucial observation is the following. 
\begin{claim}\label{cl:GowJan}
    If $\phi:V(\Ex^r(K_{k+1}^k))\to V(H)$ is a homomorphism, then $\phi(e)=\phi(f)$ for all $e,f\in\Ex^r(K_{k+1}^k)$. 
\end{claim}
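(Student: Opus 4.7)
The plan is to set up a dichotomy: either all edges of $\Ex^r(K_{k+1}^k)$ are mapped by $\phi$ to a common edge of $H$ (which is the claim), or the map $\phi$ will actually witness $\Ex^r(K_{k+1}^k)$ as a subgraph of $H$, contradicting property (iii) of \Cref{proposition:GJ1}. To set notation, label the core vertices of the underlying $K_{k+1}^k$ by $v_0,\ldots,v_k$, and write $E_i$ for the edge of $\Ex^r(K_{k+1}^k)$ whose $k$-core is $\{v_j:j\neq i\}$ and whose other $r-k$ vertices have degree one in $\Ex^r(K_{k+1}^k)$. Set $\tilde E_i := \phi(E_i)$; these are edges of $H$ because $\phi$ is a homomorphism.

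Two preparatory observations will be immediate. First, since $|\tilde E_i|=r=|E_i|$, the map $\phi$ is injective on each $E_i$. Second, the core images $\phi(v_0),\ldots,\phi(v_k)$ are pairwise distinct: if $\phi(v_a)=\phi(v_b)$ for some $a\neq b$, pick any third index $c$ (which exists since $k\ge 2$), and then $v_a,v_b\in E_c$ contradicts injectivity of $\phi$ on $E_c$.

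The heart of the argument is then the following. Suppose $\tilde E_a=\tilde E_b$ for some $a\neq b$; this common edge of $H$ contains $\{\phi(v_j):j\neq a\}\cup\{\phi(v_j):j\neq b\}=\{\phi(v_0),\ldots,\phi(v_k)\}$, a set of $k+1$ distinct vertices. For any other index $c$, the edge $\tilde E_c$ already contains the $k$-set $\{\phi(v_j):j\neq c\}$, which sits inside $\tilde E_a$, so $|\tilde E_c\cap\tilde E_a|\ge k$; property (ii) of \Cref{proposition:GJ1} then forces $\tilde E_c=\tilde E_a$. Hence the $\tilde E_i$ are either all equal (the claim) or all pairwise distinct, and it remains only to rule out the latter by producing an actual copy of $\Ex^r(K_{k+1}^k)$ in $H$.

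In the pairwise-distinct case I would show that $\phi$ is injective on all of $V(\Ex^r(K_{k+1}^k))$, which yields the embedding and contradicts property (iii). The only collisions left to rule out are $\phi(u)=\phi(v_i)$ with $u$ private to $E_i$ (note $v_i\notin E_i$, so injectivity on $E_i$ does not directly apply) and $\phi(u)=\phi(u')$ with $u,u'$ private to distinct edges $E_i,E_j$. The first case would place $\phi(v_i)$ inside $\tilde E_i$, so $\tilde E_i\supseteq\{\phi(v_0),\ldots,\phi(v_k)\}$, and the dichotomy argument would then force every $\tilde E_j$ to coincide with $\tilde E_i$, contradicting distinctness. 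The second case places $\phi(u)$ in $\tilde E_i\cap\tilde E_j$, which by property (ii) has size at most $k-1$; but $\phi(\{v_l:l\neq i,j\})$ is already a $(k-1)$-subset of this intersection, so $\phi(u)=\phi(v_a)$ for some $a\notin\{i,j\}$, and injectivity of $\phi$ on $E_i$ (which contains both $u$ and $v_a$) would force $u=v_a$, contradicting that $u$ is private. The hard part is really just organising these collision cases and applying property (ii) in the correct form each time; no new idea beyond the two preparatory observations is needed.
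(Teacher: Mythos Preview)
Your proof is correct and follows essentially the same approach as the paper's: both use injectivity of $\phi$ on each edge together with property~(ii) to establish the dichotomy ``all image edges equal or all pairwise distinct,'' and then use property~(iii) to rule out the latter. Your version is somewhat more explicit---you verify global injectivity of $\phi$ via a careful case analysis on the private vertices, whereas the paper more tersely argues that the pairwise intersections $|\phi(e_i)\cap\phi(e_j)|=k-1$ force the images to form a copy of $\Ex^r(K_{k+1}^k)$ and invokes ``by symmetry'' to propagate distinctness---but the underlying logic is the same.
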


\begin{proof}
Denote the edges of $\Ex^r(K^k_{k+1})$ by $e_1, \dots, e_{k+1}$, where each edge $e_i$ is defined as 
\[ e_i = \{v_1, \dots, v_{k+1}, w_{i,1}, \dots, w_{i, r-k}\} \setminus \{v_i\}. \]
Assume, without loss of generality, that $\phi(e_1) \ne \phi(e_2)$ for the sake of contradiction. By Proposition~\ref{proposition:GJ1}(ii), it follows that
\[ |\phi(e_1) \cap \phi(e_2)| \le k-1. \]
However, observe that the image of the intersection satisfies
\[ \phi(e_1 \cap e_2) \subseteq \phi(e_1) \cap \phi(e_2), \]
and since $|e_1 \cap e_2| = k-1$, the size of the left-hand side is $k-1$, forcing equality:
\[ \phi(e_1 \cap e_2) = \phi(e_1) \cap \phi(e_2). \]
In particular, this implies that $\phi(v_1) \notin \phi(e_1)$.

Observe that $v_1$ belongs to every edge $e_i$ with $i > 1$, and thus $\phi(v_1) \in \phi(e_i)$ for each $i > 1$. Consequently, since $\phi(v_1) \notin \phi(e_1)$ as established earlier, it follows that $\phi(e_1) \ne \phi(e_i)$ for all $i > 1$. Therefore, we have
\[
|\phi(e_1) \cap \phi(e_i)| = k - 1 \quad \text{for all } i > 1.
\]
By the symmetry of the construction, it follows that
\[
|\phi(e_i) \cap \phi(e_j)| = k - 1 \quad \text{for all } i \ne j.
\]
Hence, the set of images $\phi(e_1), \dots, \phi(e_{k+1})$ forms a subhypergraph in $H_{m,r,k}$ isomorphic to $\Ex^r(K^k_{k+1})$, which contradicts Proposition~\ref{proposition:GJ1}(iii).
\end{proof}
This allows us to prove the following.

\begin{claim}
    If $x\in V(F)$ is contained in a $K_{k+1}^k$, then for any map $\phi:V(F)\sm \{x\}\to V(H)$, there are at most $O(1)$ homomorphisms $\phi':V(\Ex^r(F))\to V(H)$ such that the restriction $\phi'|_{V(F)\sm \{x\}}$ equals $\phi$.
\end{claim}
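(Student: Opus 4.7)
The plan is to exploit the preceding Claim~\ref{cl:GowJan} together with property (ii) of Proposition~\ref{proposition:GJ1} to show that once $\phi$ is fixed, both $\phi'(x)$ and the images of every expansion vertex of $\Ex^r(F)$ are pinned down to a bounded number of choices.

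First, I would fix a copy of $K_{k+1}^k \subseteq F$ containing $x$, labelling its vertices $v_1 = x, v_2, \ldots, v_{k+1}$. The corresponding subgraph inside $\Ex^r(F)$ is a copy of $\Ex^r(K_{k+1}^k)$, so the restriction of $\phi'$ to its vertex set is a homomorphism to $H$. By Claim~\ref{cl:GowJan}, there is a single edge $S$ of $H$ such that $\phi'$ maps each of the $k+1$ expanded $r$-edges of this subgraph onto $S$. In particular, the already-determined images $\phi'(v_2), \ldots, \phi'(v_{k+1})$ form a $k$-subset of $S$, and by property (ii) of Proposition~\ref{proposition:GJ1} any $k$-set in $V(H)$ lies in at most one edge of $H$; hence $S$ is determined by $\phi$ alone, and $\phi'(x)$ must be one of the at most $r-k$ vertices of $S \setminus \{\phi'(v_2), \ldots, \phi'(v_{k+1})\}$.

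Once $\phi'(x)$ is chosen, the map $\phi'$ is determined on all of $V(F)$, so for every edge $f$ of $F$ the $k$-set $\phi'(V(f))$ is fixed. Using property (ii) of Proposition~\ref{proposition:GJ1} once more, there is at most one edge $T_f$ of $H$ containing this $k$-set, and the $r-k$ expansion vertices attached to $f$ must be mapped injectively onto $T_f \setminus \phi'(V(f))$, giving at most $(r-k)!$ choices per edge of $F$. Combining with the at most $r-k$ choices for $\phi'(x)$ yields at most $(r-k)\cdot ((r-k)!)^{e(F)}$ extensions, which is $O(1)$ since $F$, $r$, and $k$ are fixed.

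The conceptual heart of the argument is the reduction via Claim~\ref{cl:GowJan}, which kills the naive freedom of $n$ choices for $\phi'(x)$; everything else is bookkeeping. A minor subtlety is that if $\phi'(V(f))$ has fewer than $k$ distinct elements or fails to lie in any edge of $H$ for some $f \in F$, then there are simply zero valid extensions, which is consistent with (and better than) the claimed bound.
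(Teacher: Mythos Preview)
Your argument is correct and follows the same route as the paper: use Claim~\ref{cl:GowJan} to force $\phi'(x)$ into the unique edge of $H$ determined (via Proposition~\ref{proposition:GJ1}(ii)) by the $k$-set $\{\phi(v_2),\ldots,\phi(v_{k+1})\}$, and then use Proposition~\ref{proposition:GJ1}(ii) again to pin down each block of expansion vertices. Your write-up is in fact slightly tighter, giving the explicit bound $(r-k)\cdot ((r-k)!)^{e(F)}$; the one small assertion you leave implicit---that $\phi'(x)\ne \phi'(v_i)$ for every $i\ge 2$---is justified since $x$ and $v_i$ share an edge of $\Ex^r(K_{k+1}^k)$, and in any case $|S|=r$ already suffices for the $O(1)$ conclusion.
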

\begin{proof}
    Let $x_1,\ldots,x_k$ be the other vertices of the $K_{k+1}^k$ containing $x$.  Because $\Ex^r(F)$ contains an edge $e$ containing $\{x_1,\ldots,x_k\}$, if the set $X=\{\phi(x_1),\ldots,\phi(x_k)\}$ either has size less than $k$ or is not contained in an edge of $H$, then no homomorphism restricts to $\phi$, so we may assume this is not the case.  By \Cref{proposition:GJ1}(ii), there exists a unique edge $h\in H$ containing $X$, and any homomorphism $\phi'$ which restricts to $\phi$ must map $e$ to $h$.  By \Cref{cl:GowJan}, we must have $\phi'(x)\in h$.

    Let $y\in h$ and define $\phi_y:V(F)\to V(H)$ by having $\phi_y(x)=y$ and $\phi_y(z)=\phi(z)$ for all other $z$.  By the observation above, any $\phi'$ which restricts to $\phi$ must restrict to $\phi_y$ for one of the at most $O(1)$ choices $y\in h$.  We claim that for any $y\in h$ there are at most $O(1)$ homomorphism $\phi'$ which restricts to $\phi_y$, from which the result will follow.

    Indeed, consider any vertex $z\in V(\Ex^r(F))\sm V(F)$, which by definition of the expansion means there is an edge $\{z_1,\ldots,z_k\}\in E(F)$ such that $\{z,z_1,\ldots,z_k\}$ is contained in an edge $e'$ of $\Ex^r(F)$.  If the set $Z=\{\phi_y(z_1),\ldots,\phi_y(z_k)\}$ has size less than $k$ or is not contained in an edge of $H$, then no homomorphism restricts to $\phi_y$, so we may assume this is not the case.  By \Cref{proposition:GJ1}(ii), there exists a unique edge $h'\in H$ containing $Z$, and any homomorphism $\phi'$ which restricts to $\phi_y$ must map $e'$ to $h'$.  In conclusion, for any $z\in V(\Ex^r(F))\sm V(F)$ there are at most $O(1)$ vertices $z$ can map to in a homomorphism $\phi'$ which restricts to $\phi_y$.  Thus there are at most $O(1)$ homomorphisms $\phi'$ which restrict to $\phi_y$, proving the claim.
\end{proof}
Observe that the number of maps $\phi:V(F)\sm \{x\}\to V(H)$ is at most $n^{v(F)-1}$, and hence the claim above implies
\[t_{\Ex^r(F)}(H)\le n^{v(F)-1-v(\Ex^r(F))}=n^{-1-(r-k)e(F)}.\]
On the other hand, 
\[t_{K_r^r}(H)^{e(F)+\frac{1}{r-k}-\ep}=n^{(k-r-o(1))(e(F)+\frac{1}{r-k}-\ep)}=n^{-1-(r-k)e(F)+(r-k)\ep-o(1)},\]
and for $n$ sufficiently large in terms of $\ep$ this is greater than the bound for $t_{\Ex^r(F)}(H)$ obtained above, proving the result.

\end{proof}
Before going on, we note that one can easily adapt the proof above to give stronger quantitative bounds on $s(F)$ in certain cases.  For example, if there exists a subset $V\sub V(F)$ such that for every $x\in V$ there exist vertices $x_1,\ldots,x_k\in V(F)\sm V$ forming a $K_{k+1}^k$ with $x$, then one can prove
\[s(\Ex^r(F))\ge \frac{|V|}{r-k}.\]

\section{Proof of \Cref{thm:expansions}}
Here we establish an upper bound on $s(\Ex^r(F))$ in terms of $F$.  For this the following will be useful.
\begin{lem}\label{lem:trivBound}
    If $F'$ is an $r$-partite $r$-graph and $H$ is an $r$-graph such that $t_{F'}(H)\le t_{K_r^r}(H)^{s'+e(F')}$ for some $s'$, then $t_{K_r^r}(H)\ge v(H)^{-\frac{v(F')-r}{s'+e(F')-1}}$.
\end{lem}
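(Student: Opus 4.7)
The plan is to notice that, after unpacking definitions, the lemma reduces to a single clean combinatorial inequality---namely $\hom(F',H) \ge r!\,e(H)$---which then follows directly from the $r$-partite structure of $F'$.

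For the reduction, I would write $N = v(H)$, $\alpha = t_{K_r^r}(H) = r!\,e(H)/N^r$, and $\beta = t_{F'}(H) = \hom(F',H)/N^{v(F')}$. The target conclusion is equivalent to $\alpha^{s+e(F')-1}\cdot N^{v(F')-r} \ge 1$, and using the hypothesis $\beta \le \alpha^{s+e(F')}$ in the form $\alpha^{s+e(F')-1} \ge \beta/\alpha$, it suffices to prove
\[
\beta \cdot N^{v(F')-r} \ge \alpha,
\]
which after clearing denominators is exactly the integer statement $\hom(F',H) \ge r!\,e(H)$.

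To prove this last inequality, I would construct the required homomorphisms explicitly via ``collapse'' maps. Fix an $r$-partition $V_1,\ldots,V_r$ of $V(F')$ witnessing $r$-partiteness, so that each edge of $F'$ meets every part in exactly one vertex. For each ordered $r$-tuple $(u_1,\ldots,u_r)$ of vertices of $H$ whose underlying set lies in $E(H)$---there are exactly $r!\,e(H)$ such tuples---define $\phi:V(F')\to V(H)$ by $\phi(v) = u_i$ for every $v \in V_i$. Every edge of $F'$ is then mapped to $\{u_1,\ldots,u_r\}\in E(H)$, so $\phi$ is a homomorphism; and distinct tuples produce distinct maps because each $V_i$ is non-empty (as $F'$ has an edge).

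There is essentially no obstacle in this argument: the only substantive insight is that $r$-partiteness permits the collapse construction, after which everything is bookkeeping. The minor points to record are that one implicitly assumes $\alpha > 0$ (else the lemma has no content) and that $s + e(F') > 1$ so the exponent in the conclusion is well-defined; both hold in the applications of interest.
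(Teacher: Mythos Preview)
Your proof is correct and essentially identical to the paper's: both arguments hinge on the single inequality $\hom(F',H)\ge r!\,e(H)$, obtained by collapsing the $r$-partite $F'$ onto each ordered edge of $H$, and then combine this with the hypothesis via the same algebraic rearrangement (the paper just writes $t_{K_r^r}(H)=v(H)^{-\delta}$ and solves for $\delta$ rather than manipulating $\alpha$ and $\beta$ directly).
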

\begin{proof}
    Let $H$ satisfy $t_{F'}(H)\le  t_{K_r^r}(H)^{s'+e(F')}$ and let $\del$ be such that $t_{K_r^r}(H)=v(H)^{-\del}$.  This means $H$ has $(r!)^{-1} v(H)^{r-\del}$ edges, and hence $\hom(F',H)\ge v(H)^{r-\del}$ (since $F'$ mapping onto a single edge of $H$ is always a homomrphism by assumption of $F'$ being $r$-partite).  Hence \[v(H)^{r-\del-v(F')}\le t_{F'}(H)\le v(H)^{-\del(s'+e(F'))},\]
    and rearranging shows $\del\le \frac{v(F')-r}{s'+e(F')-1}$, proving the result.
\end{proof}
For our proof, it will be convenient to work with weighted $r$-graphs $W$ where the weight of an $r$-set $\{x_1,\ldots,x_r\}$ is denoted $W(x_1,\ldots,x_r)$.  We let $\Hom(F,W)$ denote the set of all maps $\phi:V(F)\to V(W)$ which are injective on $e\in E(F)$.  We define the weight $w(\phi)$ of $\phi\in \Hom(F,W)$ to be $\prod_{e\in E(F)} W(\phi(e))$ and we define $\hom(F,W)=\sum_{\phi\in \Hom(F,W)}w(\phi)$.  With this we can define the notion of homomorphism densities $t_F(W)$ exactly as before, and it is not difficult to show that if $s(F)=s$ then $t_F(W)\ge t_{K_r^r}(W)^{s+e(F)}$ for all weighted $r$-graphs $W$.

\begin{proof}[Proof of \Cref{thm:expansions}]
Recall that we wish to show that if $F$ is a $k$-graph with $s(F)<\infty$, then for all $r\ge k$ we have
 \[s(\Ex^r(F))\le s':= \frac{v(F)-k}{v(F)-k+(r-k)(s(F)+e(F)-1)}\cdot s(F). \]
 Assume for contradiction that there exists an $n$-vertex $r$-graph $H$ such that $t_{\Ex^r(F)}(H)<t_{K_r^r}(H)^{s'+e(F)}$.
 Since $s(F)<\infty$ by assumption, $F$ must be $k$-partite and hence $\Ex^r(F)$ must be $r$-partite.  Thus by \Cref{lem:trivBound} we must have $t_{K_{r}^{r}}(H)\ge  n^{-\frac{v(F)+(r-k)e(F)-r}{s'+e(F)-1}}$, or equivalently
 \begin{equation}n\ge t_{K^r_r}(H)^{-\frac{s'+e(F)-1}{v(F)+(r-k)e(F)-r}}.\label{eq:vertBound}\end{equation}

 We define an auxiliary weighted $k$-graph $W$ on $V(H)$ such that for any $k$-set $X$ we have $W(X)=(r-k)\,!\deg_H(X)$, i.e.\ $(r-k)!$ times the number of edges of $H$ containing $X$.  By definition of $s(F)$ we have

\begin{equation}\label{eq:tF(W)_lowerbound}
\begin{aligned}
&t_{F}(W)=\frac{\sum_{\phi\in \Hom(F,W)}\prod_{e\in E(F)}(r-k)\,!\deg_H(\phi(e))}{n^{v(F)}}\\
&\ge t_{K^k_k}(W)^{s(F)+e(F)}=\left(\frac{k\,!}{n^k}\sum_{X\in W}(r-k)\,!\deg_H(X)\right)^{s(F)+e(F)}=\left(\frac{r\,!e(H)}{n^k}\right)^{s(F)+e(F)}.
\end{aligned}    
\end{equation}

By definition of expansions, every homomorphism $\phi:V(\Ex^r(F))\to V(H)$ can be formed by first choosing a homomorphism $\phi':V(F)\to V(W)$, and then for each $e'\in F$ with $e\in \Ex^r(F)$ the edge containing $e'$, one chooses some edge $h\in E(H)$ containing the $k$-set $\phi(e')$ together with a bijection from $e\setminus e' $ to $h\setminus \phi(e)$.  Thus we have 
$$
\hom(\Ex^r(F),H)=\sum_{\phi\in \Hom(F,W)}\prod_{e\in E(F)}(r-k)\,!\deg_H(\phi(e)).
$$
Hence,
$$
\begin{aligned}
t_{\Ex^r(F)}(H)&=\frac{\sum_{\phi\in \Hom(F,W)}\prod_{e\in E(F)}(r-k)\,!\deg_H(\phi(e))}{n^{v(F)+(r-k)e(F)}}\\
&=\frac{t_F(W)}{n^{(r-k)e(F)}}\\
&\ge \frac{\l(r\,!e(H) n^{-k}\r)^{s(F)+e(F)}}{n^{(r-k)e(F)}}\\
&=t_{K_r^r}(H)^{s(F)+e(F)}\cdot n^{(r-k)s(F)}\\
&\ge t_{K_r^r}(H)^{s(F)+e(F)}\cdot t_{K_r^r}(H)^{-\frac{(r-k)s(F)(s'+e(F)-1)}{v(F)+(r-k)e(F)-r}},
\end{aligned}
$$
where the first inequality used \eqref{eq:tF(W)_lowerbound} and the second used \eqref{eq:vertBound}. One can verify that this final quantity equals $t_{K_r^r}(H)^{s'+e(F)}$, a contradiction to our choice of $H$.  We conclude the result.
\end{proof}

As an aside, it is tempting to generalize the statement of \Cref{thm:expansions} to hold even at $s(F)=\infty$.  Indeed, ``taking the limit'' in \Cref{thm:expansions} suggests that when $s(F)=\infty$ we should have
\[s(\Ex^r(F))\le \frac{v(F)-k}{r-k}.\]
This  does hold whenever $\Ex^r(F)$ satisfies $\ex(n,\Ex^r(F))=O(n^k)$ by our general upper bound \Cref{cor:General} since
\[\frac{v(\Ex^r(F))-k}{r-k}-e(\Ex^r(F))=\frac{v(F)-k}{r-k}.\]
However, such a result does not hold in general.  For example, it certainly fails if $\Ex^r(F)$ is not $r$-partite, such as when considering $\Ex^3(K_t)$ with  $t>3$.

\section{Concluding Remarks}
There are many questions left to explore regarding (non-)Sidorenko hypergraphs which we break into three broad categories.

\textbf{Sidorenko Expansions.}  In \Cref{cor:Sidorenko} we showed that expansions of Sidorenko hypergraphs are Sidorenko.  This motivates the following conjecture.
\begin{conj}\label{conj:weakSidorenko}
    For every bipartite graph $F$, there exists an $r\ge 2$ such that $\Ex^r(F)$ is Sidorenko.
\end{conj}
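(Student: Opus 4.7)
The plan is to combine \Cref{thm:expansions} with a slack-tracking refinement and, in parallel, to enlarge the catalogue of Sidorenko expansions by direct construction. A preliminary step is to verify $s(F)<\infty$ for every bipartite graph $F$, which is the hypothesis needed to invoke \Cref{thm:expansions}. Together with \Cref{lem:trivBound}, the trivial inequality $\hom(F,H)\ge 2e(H)$ already gives $s(F)\le (v(F)-2)/\delta+1-e(F)$ whenever $t_{K_2^2}(H)=v(H)^{-\delta}$ with $\delta$ bounded below, so only the dense regime $\delta\to 0$ is in doubt; for $H$ with edge density near $1$ a first-order expansion of both $t_F(H)$ and $t_{K_2^2}(H)$ around $1$ forces $s$ close to $0$. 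Patching these regimes should yield a uniform finite bound on $s(F)$.

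Granted $s(F)<\infty$, \Cref{thm:expansions} yields $s(\Ex^r(F))\to 0$ as $r\to\infty$, but this upper bound is strictly positive for every finite $r$ when $s(F)>0$, so the conjecture does not follow directly. The main obstacle is to push the bound all the way to $0$ for some specific finite $r$. My proposal is to revisit the proof of \Cref{thm:expansions} and exploit its slack: the reduction from an $r$-graph $H$ to the weighted shadow $W(X)=(r-k)!\deg_H(X)$ is tight only for $H$ with very uniform link degrees, whereas the subsequent inequality $t_F(W)\ge t_{K_2^2}(W)^{s(F)+e(F)}$ is tight only for very rigid $W$. For $r$ large these two extremality conditions should be jointly incompatible unless $H$ is essentially a blow-up of a single edge, in which case Sidorenko's inequality for $\Ex^r(F)$ can be verified directly by degree counting. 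Formalising this dichotomy, most plausibly via a graphon-type compactness argument on the space of weighted shadow graphs, is the analytic heart of the problem.

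A complementary and more constructive route is to extend \Cref{cor:upperBounds}(c)--(d) by proving that expansions of suitable amalgamations of tight trees and loose even cycles remain Sidorenko. If identifying two Sidorenko $k$-graphs along an independent vertex set preserves the Sidorenko property of their $r$-expansion when $r$ is sufficiently large, then an ear decomposition of $2$-connected bipartite $F$ would cover broad classes such as theta graphs and subdivisions of small bipartite graphs. The most resistant cases will be dense bipartite $F$ such as $K_{t,t}$ with $t\ge 4$, where neither the slack-tracking nor the amalgamation strategy applies cleanly and where the classical Sidorenko conjecture is itself open; for these, a genuinely new technique, perhaps an entropy or absorption argument adapted to the weighted shadow setting, will likely be needed, and this is where I expect the true obstacle to lie.
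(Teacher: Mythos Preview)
The statement you are attempting to prove is \Cref{conj:weakSidorenko}, which is presented in the paper as an \emph{open conjecture}; the paper gives no proof. So there is no ``paper's own proof'' to compare against, and your write-up is best read as a research plan rather than a proof.

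Two specific comments on the plan itself.

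First, the preliminary step of showing $s(F)<\infty$ for bipartite $F$ is already contained in the paper and does not require your two-regime patching argument: \Cref{cor:General} applied with any $\alpha<2$ satisfying $\ex(n,F)=O(n^{\alpha})$ (such an $\alpha$ exists by the K\H{o}v\'ari--S\'os--Tur\'an theorem) gives the finite bound $s(F)\le \frac{v(F)-\alpha}{2-\alpha}-e(F)$ directly.

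Second, and more importantly, the heart of your proposal is precisely where the genuine obstacle sits, and you have not closed it. You correctly observe that \Cref{thm:expansions} only yields $s(\Ex^r(F))\to 0$ as $r\to\infty$, never $s(\Ex^r(F))=0$ at any finite $r$ when $s(F)>0$. Your suggested remedy, a ``slack-tracking'' dichotomy showing that the two inequalities in the proof of \Cref{thm:expansions} cannot be simultaneously tight unless $H$ is essentially a blow-up, is at present only a heuristic. Note that \Cref{thm:expansions} is a clean monotone bound in $s(F)$; any improvement that pushed the conclusion to $s(\Ex^r(F))=0$ for some finite $r$ whenever $s(F)<\infty$ would, by \Cref{cor:Sidorenko} read backwards, amount to proving that the Sidorenko property of $\Ex^r(F)$ depends only on the finiteness of $s(F)$, which is far stronger than anything in the paper and in particular would, when combined with the finiteness of $s(F)$ for all bipartite $F$, already resolve \Cref{conj:weakSidorenko} in full. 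There is no evidence that a compactness argument on weighted shadow graphs can deliver this, and your own final paragraph concedes that for dense bipartite $F$ such as $K_{t,t}$ with $t\ge 4$ ``a genuinely new technique\ldots will likely be needed.'' That concession is accurate: what you have written is a reasonable description of where one might look, but it is not a proof, and the paper does not claim one either.
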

Note that Sidorenko's conjecture predicts this holds with $r=2$ for all $F$.  Moreover, \Cref{cor:Sidorenko} suggests it may be easier to prove \Cref{conj:weakSidorenko} for larger values of $r$ (since if it holds for some $r_0$, then it holds for all $r\ge r_0$).  As such, \Cref{conj:weakSidorenko} can be viewed as a (potentially) weaker version of Sidorenko's conjecture, and it would be particularly interesting if one could verify it for some $r\ge 2$ independent of $F$.

Another question asks whether the converse of \Cref{cor:Sidorenko} holds.

\begin{quest}\label{quest:expansionConverse}
    Is it true that $F$ is Sidorenko if and only if all of its expansions $\Ex^r(F)$ are Sidorenko?
\end{quest}
Note that if \Cref{quest:expansionConverse} has an affirmative answer, then \Cref{conj:weakSidorenko} would be equivalent to Sidorenko's conjecture.  The simplest case that we do not know how to answer is the following.

\begin{quest}\label{quest:expandSidorenko}
    If $F$ is a non-bipartite graph, are all of its expansions $\Ex^r(F)$ not Sidorenko?
\end{quest}
We note that \cite[Theorem 3.1]{conlon2023extremal} shows this holds if $F$ has odd girth, but beyond this we know nothing. 

\textbf{$k$-linear Hypergraphs}.  In \Cref{thm:containKk} we proved that expansions of $k$-graphs containing $K_{k+1}^k$ are not Sidorenko.  We conjecture that the following stronger result holds.
\begin{conj}\label{conj:linear}
    If $F$ is an $r$-graph such that $|e\cap f|<k$ for any distinct $e,f\in F$ and such that $F$ contains an expansion $\Ex^r(K_{k+1}^k)$ as a subgraph, then $F$ is not Sidorenko.
\end{conj}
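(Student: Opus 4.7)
The plan is to extend the Gowers--Janzer-based argument from \Cref{thm:containKk} to the $k$-linear setting and in fact to prove the quantitative refinement $s(F)\ge \tfrac{1}{r-k}$, which would in particular imply $F$ is not Sidorenko. I would take $H=H_{n,r,k}$ as the test graph and try to upper bound $\hom(F,H)$. The crucial entry point is \Cref{cl:GowJan}: applied to the embedded $\Ex^r(K_{k+1}^k)\subseteq F$, it forces any homomorphism $\phi:V(F)\to V(H)$ to collapse the $k+1$ edges of $\Ex^r(K_{k+1}^k)$ onto a single edge $h\in E(H)$, so that $\phi(U)\subseteq h$ for $U:=V(\Ex^r(K_{k+1}^k))$. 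Grouping homomorphisms by the pair $(h,\phi|_U)$, there are at most $e(H)=n^{k+o(1)}$ choices for $h$ and only $O_{r,k}(1)$ admissible $\phi|_U$ for each $h$, so the problem reduces to bounding, uniformly in $(h,\phi|_U)$, the number $\Phi(\phi|_U)$ of homomorphisms $V(F)\to V(H)$ extending $\phi|_U$.

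A direct arithmetic check (mirroring the final step of the proof of \Cref{thm:containKk}, using $|U|=(k+1)(r-k+1)$) shows that $\hom(F,H)\le n^{v(F)+(k-r)e(F)-1+o(1)}$, and hence $s(F)\ge \tfrac{1}{r-k}$, as soon as one can establish
\[\Phi(\phi|_U)\le n^{v(F)-|U|+(k-r)(e(F)-k-1)+o(1)}.\]
Heuristically this is a Sidorenko-style upper bound on the contribution of the remainder $F^*:=E(F)\setminus E(\Ex^r(K_{k+1}^k))$ given a fixed boundary inside $h$. For each edge $f\in F^*$, the $k$-linearity of $F$ gives $|f\cap e_i|<k$ for every $e_i\in \Ex^r(K_{k+1}^k)$, and combined with the $k$-linearity of $H$ (\Cref{proposition:GJ1}(ii)) the analysis splits naturally: when $|\phi(f\cap U)|\ge k$ the image $\phi(f)$ is forced to equal $h$ and the vertices of $f$ outside $U$ are pinned inside $h$ with only $O(1)$ freedom; when $|\phi(f\cap U)|<k$ one must count edges of $H$ through a fixed subset of size at most $k-1$ and then distribute the outside vertices of $f$.

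The hard part will be this second case. A $(k-1)$-subset can lie in many edges of $H_{n,r,k}$, and edges of $F^*$ interact through shared outside vertices, so the target product-type bound is not automatic from \Cref{proposition:GJ1} alone. My first attempt would be an induction on $|E(F^*)|$, removing edges one at a time while preserving both the $k$-linearity of $F$ and the $\Ex^r(K_{k+1}^k)$-containment so that \Cref{cl:GowJan} continues to apply. Failing that, I would try to extract sharper quantitative information from the Gowers--Janzer construction --- for instance, average or maximum codegrees of $(k-1)$-subsets, or a supersaturation-type guarantee --- and use it to push a per-edge factor of $n^{k-r+o(1)}$ through the extension count. If the Gowers--Janzer graph turns out to be too coarse a witness, the conjecture may instead require a new construction of a $k$-linear, $\Ex^r(K_{k+1}^k)$-free $r$-graph specifically engineered so that every $k$-linear $F$ containing $\Ex^r(K_{k+1}^k)$ is witnessed as non-Sidorenko.
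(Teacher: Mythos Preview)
This statement is posed in the paper as an open conjecture; there is no proof to compare against. The only case the paper settles is $k=2$ (\Cref{thm:looseLinear} in the appendix, reproving a result of Conlon--Lee--Sidorenko), and it does so by a route entirely different from yours: the witness there is the graphon $W$ of $K_r^r$ mixed with the constant graphon $\tfrac12$, and the argument exploits that $t_{F'}(W)=t_{K_r^r}(W)^{e(F')}$ exactly when $F'$ is a forest while $t_{C_3^r}(W)<t_{K_r^r}(W)^3$ by a direct hand computation. That proof yields no quantitative lower bound on $s(F)$, and the paper offers it only as a possible source of inspiration toward the general conjecture, noting that the original Conlon--Lee--Sidorenko construction does not seem to generalize to $k>2$ either.

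Your proposal is the natural continuation of the proof of \Cref{thm:containKk}, and you have correctly located the real obstruction. To see concretely why the gap is genuine, specialise to the case where $F^*:=E(F)\setminus E(\Ex^r(K_{k+1}^k))$ is vertex-disjoint from $U$: then your target $\Phi(\phi|_U)\le n^{(v(F)-|U|)+(k-r)(e(F)-k-1)+o(1)}$ becomes $t_{F^*}(H_{n,r,k})\le t_{K_r^r}(H_{n,r,k})^{e(F^*)-o(1)}$. But $F^*$ is an \emph{arbitrary} $k$-linear $r$-graph and may itself be Sidorenko (a disjoint $C_{2\ell}^r$ or an expanded tight tree, say), in which case $t_{F^*}(H_{n,r,k})\ge t_{K_r^r}(H_{n,r,k})^{e(F^*)}$ as well, and you are forced to prove near-equality --- a strong pseudorandomness statement about the Gowers--Janzer hypergraph going well beyond \Cref{proposition:GJ1}. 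Your induction idea has a further structural problem: once you peel off an edge of $F^*$ that separates a component of $F$ from the embedded $\Ex^r(K_{k+1}^k)$, \Cref{cl:GowJan} no longer constrains that component at all. Finally, note that the quantitative goal $s(F)\ge \tfrac{1}{r-k}$ is strictly stronger than what the conjecture asks; even if $H_{n,r,k}$ is the right witness, aiming only for $s(F)>0$ may be more realistic.
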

The case $k=2$ was proven in \cite{conlon2023extremal}, but as they note, their construction does not seem to effectively generalize to higher uniformities.  We offer an alternative proof of the $k=2$ case in the appendix of the arXiv version of this paper to serve as another potential source of inspiration towards proving \Cref{conj:linear}.

\textbf{Bounds for $s(F)$}.   \Cref{thm:main} motivates the problem of determining $s(F)$ for non-Sidorenko hypergraphs, especially those for which the random Tur\'an number is unknown.  One outstanding case is that of loose odd cycles.
\begin{prob}
    Determine $s(C_{2\ell+1}^r)$.
\end{prob}
In \Cref{cor:upperBounds} we showed $s(C_{2\ell+1}^r)\le \frac{2\ell-1}{r-2}$, and by considering $H=K_r^r$ it is possible to prove that $s(C_{2\ell+1}^r)$ is at least roughly $r^{-2\ell-1}$.  We believe the upper bound is closer to the truth, but we do not think this is tight.  Our best guess (though we would not go so far as to make it a conjecture) is that \[s(C_{2\ell+1}^r)=\frac{1}{(r-1)\ell-1}.\]
Indeed, the lower bound $s(C_{2\ell+1}^r)\ge \frac{1}{(r-1)\ell-1}$ would follow if there existed $n$-vertex $r$-graphs of girth $2\ell+2$ with $n^{1+1/\ell-o(1)}$ edges, which is the densest such an $r$-graph can be \cite{collier2018linear}.  Such $r$-graphs are only known to exist when $\ell=1$ due to Ruzsa-Szemer\'edi type constructions, and it is difficult for us to imagine a construction that would give a better lower bound for $s(C_{2\ell+1}^r)$ than this.  We also note that the general upper bound $s(C_{2\ell+1}^r)\le \frac{1}{(r-1)\ell-1}$ would follow from the $r=3$ result by using \Cref{thm:expansions}.



\section*{Acknowledgements}

We thank the anonymous referees for their careful reading of the paper and useful suggestions.


%
%

\bibliographystyle{alphaurl}
\bibliography{refs}

\newcommand{\etalchar}[1]{$^{#1}$}
\begin{thebibliography}{CKLL18}

\bibitem[BBM09]{balogh2009erdHos}
J{\'o}zsef Balogh, Tom Bohman, and Dhruv Mubayi.
\newblock Erd{\H{o}}s--{Ko}--{Rado} in random hypergraphs.
\newblock {\em Combinatorics, Probability and Computing}, 18(5):629--646, 2009.

\bibitem[BKL23]{balogh2023sharp}
J{\'o}zsef Balogh, Robert~A Krueger, and Haoran Luo.
\newblock Sharp threshold for the {E}rd{\H{o}}s--{K}o--{R}ado theorem.
\newblock {\em Random Structures \& Algorithms}, 62(1):3--28, 2023.

\bibitem[CCGJ18]{collier2018linear}
Clayton Collier-Cartaino, Nathan Graber, and Tao Jiang.
\newblock Linear {T}ur\'an numbers of linear cycles and cycle-complete {R}amsey numbers.
\newblock {\em Combinatorics, Probability and Computing}, 27(3):358--386, 2018.

\bibitem[CFS10]{conlon2010approximate}
David Conlon, Jacob Fox, and Benny Sudakov.
\newblock An approximate version of {S}idorenko’s conjecture.
\newblock {\em Geometric and Functional Analysis}, 20:1354--1366, 2010.

\bibitem[CG16]{conlon2016combinatorial}
David Conlon and William~Timothy Gowers.
\newblock Combinatorial theorems in sparse random sets.
\newblock {\em Annals of Mathematics}, pages 367--454, 2016.

\bibitem[CKLL18]{conlon2018some}
David Conlon, Jeong~Han Kim, Choongbum Lee, and Joonkyung Lee.
\newblock Some advances on {S}idorenko's conjecture.
\newblock {\em Journal of the London Mathematical Society}, 98(3):593--608, 2018.

\bibitem[CL17]{conlon2017finite}
David Conlon and Joonkyung Lee.
\newblock Finite reflection groups and graph norms.
\newblock {\em Advances in Mathematics}, 315:130--165, 2017.

\bibitem[CL18]{conlon2018sidorenko}
David Conlon and Joonkyung Lee.
\newblock {S}idorenko's conjecture for blow-ups.
\newblock {\em arXiv preprint arXiv:1809.01259}, 2018.

\bibitem[CLS23]{conlon2023extremal}
David Conlon, Joonkyung Lee, and Alexander {S}idorenko.
\newblock Extremal numbers and {S}idorenko's conjecture.
\newblock {\em arXiv preprint arXiv:2307.04588}, 2023.

\bibitem[CR21]{coregliano2021biregularity}
Leonardo~N Coregliano and Alexander~A Razborov.
\newblock Biregularity in {S}idorenko's conjecture.
\newblock {\em arXiv preprint arXiv:2108.06599}, 2021.

\bibitem[F\"94]{F}
Zolt\'{a}n F\"{u}redi.
\newblock Random {R}amsey graphs for the four-cycle.
\newblock {\em Discrete Math.}, 126(1-3):407--410, 1994.
\newblock \href {https://doi.org/10.1016/0012-365X(94)90287-9} {\path{doi:10.1016/0012-365X(94)90287-9}}.

\bibitem[FNW24]{frankl2024matching}
Peter Frankl, Jiaxi Nie, and Jian Wang.
\newblock On the matching problem in random hypergraphs.
\newblock {\em arXiv preprint arXiv:2410.15585}, 2024.

\bibitem[FSS{\etalchar{+}}20]{fox2020triforce}
Jacob Fox, Ashwin Sah, Mehtaab Sawhney, David Stoner, and Yufei Zhao.
\newblock Triforce and corners.
\newblock In {\em Mathematical Proceedings of the Cambridge Philosophical Society}, volume 169, pages 209--223. Cambridge University Press, 2020.

\bibitem[FW17]{fox2017local}
Jacob Fox and Fan Wei.
\newblock On the local approach to {S}idorenko's conjecture.
\newblock {\em Electronic Notes in Discrete Mathematics}, 61:459--465, 2017.

\bibitem[GHO17]{gauy2017erdHos}
Marcelo~M Gauy, Hiep Han, and Igor~C Oliveira.
\newblock Erd{\H{o}}s--{Ko}--{Rado} for random hypergraphs: asymptotics and stability.
\newblock {\em Combinatorics, Probability and Computing}, 26(3):406--422, 2017.

\bibitem[GJ21]{gowers2021generalizations}
WT~Gowers and Barnab{\'a}s Janzer.
\newblock Generalizations of the {R}uzsa--{S}zemer{\'e}di and rainbow tur{\'a}n problems for cliques.
\newblock {\em Combinatorics, Probability and Computing}, 30(4):591--608, 2021.

\bibitem[Hat10]{hatami2010graph}
Hamed Hatami.
\newblock Graph norms and {S}idorenko’s conjecture.
\newblock {\em Israel Journal of Mathematics}, 175:125--150, 2010.

\bibitem[HK19a]{hamm2019erdHosI}
Arran Hamm and Jeff Kahn.
\newblock On {Erd{\H{o}}s}--{Ko}--{Rado} for random hypergraphs {I}.
\newblock {\em Combinatorics, Probability and Computing}, 28(6):881--916, 2019.

\bibitem[HK19b]{hamm2019erdHos}
Arran Hamm and Jeff Kahn.
\newblock On {Erd{\H{o}}s}--{Ko}--{Rado} for random hypergraphs {II}.
\newblock {\em Combinatorics, Probability and Computing}, 28(1):61--80, 2019.

\bibitem[KLL16]{kim2016two}
Jeong~Han Kim, Choongbum Lee, and Joonkyung Lee.
\newblock Two approaches to {S}idorenko’s conjecture.
\newblock {\em Transactions of the American Mathematical Society}, 368(7):5057--5074, 2016.

\bibitem[Lov11]{lovasz2011subgraph}
L{\'a}szl{\'o} Lov{\'a}sz.
\newblock Subgraph densities in signed graphons and the local {S}imonovits--{S}idorenko conjecture.
\newblock {\em the electronic journal of combinatorics}, pages P127--P127, 2011.

\bibitem[LS11]{li2011logarithimic}
JL~Li and Bal{\'a}zs Szegedy.
\newblock On the logarithimic calculus and {S}idorenko's conjecture.
\newblock {\em arXiv preprint arXiv:1107.1153}, 2011.

\bibitem[MS16]{morris2016number}
Robert Morris and David Saxton.
\newblock The number of ${C}_{2\ell}$-free graphs.
\newblock {\em Advances in Mathematics}, 298:534--580, 2016.

\bibitem[MS23]{mckinley2023random}
Gwen McKinley and Sam Spiro.
\newblock The random {T}ur\'an problem for theta graphs.
\newblock {\em arXiv preprint arXiv:2305.16550}, 2023.

\bibitem[MY23]{mubayi2023random}
Dhruv Mubayi and Liana Yepremyan.
\newblock On the random {T}ur\'an number of linear cycles.
\newblock {\em arXiv preprint arXiv:2304.15003}, 2023.

\bibitem[Nie23]{nie2023random}
Jiaxi Nie.
\newblock Random {T}ur\'an theorem for expansions of spanning subgraphs of tight trees.
\newblock {\em arXiv preprint arXiv:2305.04193}, 2023.

\bibitem[Nie24]{nie2024turan}
Jiaxi Nie.
\newblock Tur{\'a}n theorems for even cycles in random hypergraph.
\newblock {\em Journal of Combinatorial Theory, Series B}, 167:23--54, 2024.

\bibitem[NSV21]{nie2021triangle}
Jiaxi Nie, Sam Spiro, and Jacques Verstra{\"e}te.
\newblock Triangle-free subgraphs of hypergraphs.
\newblock {\em Graphs and Combinatorics}, 37:2555--2570, 2021.

\bibitem[RS78]{ruzsa1978}
I.~Z. Ruzsa and E.~Szemer\'edi.
\newblock Triple systems with no six points carrying three triangles.
\newblock {\em Combinatorics (Keszthely, 1976), Coll. Math. Soc. J. Bolyai}, 18:939--945, 1978.

\bibitem[Sch16]{schacht2016extremal}
Mathias Schacht.
\newblock Extremal results for random discrete structures.
\newblock {\em Annals of Mathematics}, pages 333--365, 2016.

\bibitem[Sid91]{Sidorenko1991Inequalities}
A.~F. Sidorenko.
\newblock Inequalities for functionals generated by bipartite graphs.
\newblock {\em Diskret. Mat.}, 3(3):50--65, 1991.
\newblock \href {https://doi.org/10.1515/dma.1992.2.5.489} {\path{doi:10.1515/dma.1992.2.5.489}}.

\bibitem[Sid93]{Sidorenko1993Acorrelation}
Alexander Sidorenko.
\newblock A correlation inequality for bipartite graphs.
\newblock {\em Graphs Combin.}, 9(2):201--204, 1993.
\newblock \href {https://doi.org/10.1007/BF02988307} {\path{doi:10.1007/BF02988307}}.

\bibitem[Spi24]{spiro2024random}
Sam Spiro.
\newblock Random polynomial graphs for random tur{\'a}n problems.
\newblock {\em Journal of Graph Theory}, 105(2):192--208, 2024.

\bibitem[Sze14]{szegedy2014information}
Balazs Szegedy.
\newblock An information theoretic approach to {S}idorenko's conjecture.
\newblock {\em arXiv preprint arXiv:1406.6738}, 2014.

\end{thebibliography}

\newpage

\section*{Appendix: Linear Hypergraphs with Loose Triangles}

Here we give an alternative proof to the following result from \cite{conlon2023extremal}.
\begin{thm}[\cite{conlon2023extremal}]\label{thm:looseLinear}
    If $F$ is linear and contains a loose triangle, then $F$ is not Sidorenko.
\end{thm}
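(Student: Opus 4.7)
The plan is to use the complete $r$-uniform hypergraph $K_n^{(r)}$ as the witness, for $n$ sufficiently large. By the probabilistic interpretation, $t_F(K_n^{(r)}) = \P_\phi[\bigcap_{e \in E(F)} A_e]$, where $\phi : V(F) \to [n]$ is uniformly random and $A_e$ is the event that $\phi|_e$ is injective, so that $\P[A_e] = p := (n)_r/n^r = t_{K_r^r}(K_n^{(r)})$. The non-Sidorenko inequality $t_F(K_n^{(r)}) < t_{K_r^r}(K_n^{(r)})^{e(F)}$ is equivalent to $\P[\bigcap_e A_e] < p^{e(F)}$, which is what we aim to prove.

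A direct check shows that for any two edges $e,f$ with $|e \cap f| \leq 1$ (which is automatic in a linear hypergraph), conditioning on the image of the shared vertex gives $\P[A_e \cap A_f] = p^2$; the same type of argument shows that ``sunflower'' and ``path'' configurations of linear edges yield mutually independent $A_e$'s. The first non-trivial correlation arises at a loose triangle: for $T = \{e_1, e_2, e_3\} \sub F$ with distinct pairwise shared vertices $v_{12}, v_{13}, v_{23}$, conditioning on $(\phi(v_{12}), \phi(v_{13}), \phi(v_{23}))$ yields
\[\P\l[\bigcap_{i=1}^{3} A_{e_i}\r] = \f{(n)_3}{n^3}\l(\f{(n-2)_{r-2}}{n^{r-2}}\r)^3 = p^3 \cdot \f{n(n-2)}{(n-1)^2} < p^3,\]
revealing a strict negative correlation at the triangle.

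To extend this to all of $F$, I would order the edges $e_1,\dots,e_{e(F)}$ so that $T$ is processed first, and decompose
\[\P\l[\bigcap_{e \in E(F)} A_e\r] = \prod_{i=1}^{e(F)}\P[A_{e_i} \mid A_{e_1},\dots,A_{e_{i-1}}].\]
Each conditional factor should be at most $p$, by a ``negative association'' property of injectivity events on a linear hypergraph; the factor for the third edge of $T$ is strictly smaller than $p$ by the triangle computation above. Combining these yields $t_F(K_n^{(r)}) < p^{e(F)} = t_{K_r^r}(K_n^{(r)})^{e(F)}$, so $F$ is not Sidorenko.

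The main obstacle is justifying the negative-association claim $\P[A_{e_i} \mid \text{previous events}] \leq p$ in full generality. While it holds with equality when $e_i$ attaches to prior edges in a tree-like manner, the case when $e_i$ creates new loose triangles with prior edges requires extra bookkeeping (though such additional triangles should only reinforce the strict inequality). A cleaner alternative is to directly expand both $t_F(K_n^{(r)})$ and $t_{K_r^r}(K_n^{(r)})^{e(F)}$ as power series in $1/n$: the $1/n$ coefficients agree (both equal $-e(F)\binom{r}{2}$ by linearity of $F$), and the $1/n^2$ coefficient of $t_F$ is strictly smaller, with the gap traceable to the loose triangle through the combinatorial interpretation of
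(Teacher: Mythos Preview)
Your route (a) via conditional probabilities has a gap more serious than you indicate. The ``negative association'' bound $\P[A_{e_i}\mid A_{e_1},\dots,A_{e_{i-1}}]\le p$ is simply false when $e_i$ closes a loose \emph{even} cycle with earlier edges. For instance, the first three edges of $C_4^r$ form a loose path (a forest), so $\P[A_{e_1}\cap A_{e_2}\cap A_{e_3}]=p^3$ exactly; but a direct count gives $\P[\bigcap_{i=1}^4 A_{e_i}]=p^4\bigl(1+\tfrac{1}{(n-1)^3}\bigr)$, whence $\P[A_{e_4}\mid A_{e_1},A_{e_2},A_{e_3}]>p$. So the obstruction is not ``additional triangles'' (which, as you say, would help) but even cycles, which push the wrong way. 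This cannot be fixed by reordering the edges.

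Your route (b) via the $1/n$ expansion does work and, once written out, gives a clean elementary proof. By inclusion--exclusion over collisions, $t_F(K_n^{(r)})=\sum_{S}(-1)^{|S|}n^{c(S)-|V(S)|}$, the sum over subgraphs $S$ of the ``pair graph'' on $V(F)$ whose edges are the $e(F)\binom{r}{2}$ within-hyperedge pairs. The only $S$ contributing at order $1/n^2$ are two-edge forests and triangles in this pair graph; linearity of $F$ forces every such triangle either to sit inside a single hyperedge (there are $e(F)\binom{r}{3}$ of these, exactly matching the corresponding term in the expansion of $p^{e(F)}$) or to be the three core vertices of a loose triangle of $F$. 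Hence $t_F(K_n^{(r)})-p^{e(F)}=-T_\Delta/n^2+O(1/n^3)$ with $T_\Delta\ge 1$ the number of loose triangles in $F$, and the result follows for large $n$. Note this also explains why (a) is salvageable in spirit: the positive correlation from any $C_{2\ell}^r$ is only $O(1/n^{2\ell-1})$, dominated by the triangle's $-1/n^2$.

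The paper's proof is genuinely different. It does not use $K_n^{(r)}$ at all; it takes the graphon $W$ corresponding to the single-edge hypergraph $K_r^r$, forms the mixture $\tfrac12+\tfrac12 W$, and expands $t_F(\tfrac12+\tfrac12 W)=2^{-e(F)}\sum_{F'\subseteq F}t_{F'}(W)$ as a sum over edge-subgraphs. Forests $F'$ contribute exactly $q^{e(F')}$ where $q=t_{K_r^r}(W)$, the loose triangle contributes strictly less than $q^3$, and after tensor powers make $q$ small, every $F'$ with $e(F')\ge 4$ becomes negligible against $q^3$. Both proofs isolate the loose triangle as the leading obstruction, but via different limits: yours sends $n\to\infty$ in $K_n^{(r)}$ (a perturbation of the constant-$1$ graphon), while the paper sends $q\to 0$ via tensor powers. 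Your argument, once completed, has the advantage of avoiding graphons and tensors entirely.
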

As our primary aim is to demonstrate a new approach and not to prove anything novel, we will be somewhat loose with the details. Our proof will rely on the language of graphons; we refer the reader to e.g.\ \cite{conlon2023extremal} for a refresher on the relevant definitions.
\begin{proof}

Our proof relies on the following basic claims.
\begin{claim}\label{cl:expand}
    Let $W$ be a graphon and $p$ the constant graphon of density $p$.  For any $r$-graph $F$, we have
    \[t_F(p+(1-p)W)=\sum_{F'\sub F} p^{e(F)-e(F')}(1-p)^{e(F')} t_{F'}(W).\]
\end{claim}
We say that a hypergraph $F$ is a \textit{forest} if one can order its edges $e_1,\ldots,e_m$ such that $e_i$ intersects $\bigcup_{j<i} e_j$ in at most one vertex.

\begin{claim}\label{cl:regularLoose}
    There exists a graphon $W$ with $t_F(W)=t_{K_r^r}(W)^k$ whenever $F$ is a forest with $k$ edges and where $t_F(W)<t_{K_r^r}(W)^3$ for any other $F$ which is linear.
\end{claim}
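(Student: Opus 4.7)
The plan is to take $W$ to be the step $r$-graphon obtained by blowing up the single-edge hypergraph $K_r^r$: partition $[0,1]$ into $r$ equal-measure parts $P_1,\ldots,P_r$ and set $W(x_1,\ldots,x_r)=1$ if there is a permutation $\sigma\in S_r$ with $x_i\in P_{\sigma(i)}$ for every $i$, and $W\equiv 0$ otherwise. Under this choice, a homomorphism from an $r$-graph $G$ to $W$ is exactly a map $V(G)\to[r]$ sending every edge bijectively onto $[r]$, so $t_G(W)=\hom(G,K_r^r)/r^{v(G)}$ and in particular $q:=t_{K_r^r}(W)=r!/r^r$, which is at most $2/9$ whenever $r\ge 3$.

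To verify the forest equality, I will show $t_F(W)=q^k$ for every forest $F$ with $k$ edges by induction on $k$. The forest ordering supplies a leaf edge $e_m$ whose intersection with $F':=F-e_m$ consists of at most one vertex $v$. Extending a homomorphism of $F'$ across $e_m$ reduces to choosing a bijection from the $r-1$ ``new'' vertices of $e_m$ onto $[r]\setminus\{\phi(v)\}$ (or onto all of $[r]$ if $e_m$ is isolated from $F'$); this contributes $(r-1)!$ extensions per $F'$-homomorphism, and after normalizing by the extra $r^{r-1}$ ambient choices the factor is exactly $q=(r-1)!/r^{r-1}$, closing the induction.

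To verify the strict inequality, I will exploit the monotonicity $t_F(W)\le t_{F'}(W)$ for $F'\sub F$, which follows from the trivial bound $\hom(F,K_r^r)\le r^{v(F)-v(F')}\hom(F',K_r^r)$. Every connected linear non-forest contains some loose cycle $C_{\ell_0}^r$ with $\ell_0\ge 3$ as a sub-hypergraph, so it suffices to show $t_{C_\ell^r}(W)<q^3$ for every $\ell\ge 3$. A direct chromatic-polynomial computation---the shared vertices of $C_\ell^r$ must form a proper $r$-coloring of the cycle $C_\ell$, while the private vertices of each edge freely permute over the remaining $r-2$ labels---gives
\[
t_{C_\ell^r}(W)=q^\ell\bigl(1+(-1)^\ell(r-1)^{1-\ell}\bigr).
\]
For $\ell=3$ this equals $q^3\cdot r(r-2)/(r-1)^2<q^3$, and for $\ell\ge 4$ we have $t_{C_\ell^r}(W)\le\tfrac{9}{8}q^\ell\le\tfrac{9q}{8}\cdot q^3<q^3$ since $q\le 2/9$. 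The disconnected case follows from the factorization $t_F(W)=\prod_i t_{F_i}(W)$ over connected components.

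The one step that warrants care is the chromatic-polynomial identity $\hom(C_\ell^r,K_r^r)=\chi(C_\ell,r)\cdot((r-2)!)^\ell$, which reduces a hypergraph homomorphism count to counting closed walks of length $\ell$ in $K_r$ with consecutive vertices distinct; this reduction crucially exploits the bijectivity forced by $W$. Once this identity is granted, the full claim follows by combining the forest induction with the loose-cycle calculation via monotonicity, with no further casework on the global structure of $F$ required.
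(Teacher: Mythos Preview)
Your proof is correct and follows essentially the same approach as the paper: both take $W$ to be the step graphon corresponding to $K_r^r$, verify the forest identity via regularity, and reduce the non-forest case to loose cycles via the monotonicity $t_F(W)\le t_{F'}(W)$ for $F'\subseteq F$. The only cosmetic difference is that you compute $t_{C_\ell^r}(W)=q^\ell\bigl(1+(-1)^\ell(r-1)^{1-\ell}\bigr)$ uniformly for all $\ell\ge 3$ via the chromatic polynomial, whereas the paper treats only $\ell=3$ by direct computation and dispatches $\ell\ge 4$ by observing that $C_4^r$ contains a non-spanning $3$-edge forest and that $C_\ell^r$ for $\ell\ge 5$ contains a $4$-edge forest.
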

\begin{proof}
    Take $W$ to be the graphon corresponding to $K_r^r$, i.e.,  $W(x_1,\ldots,x_r)=1$ if $|\{x_1,\ldots,x_r\}\cap [(i-1)/r,i/r]|=1$ for all $1\le i\le r$ and $W(x_1,\ldots,x_r)=0$ otherwise.  It is not difficult to verify that $t_F(W)=t_{K_r^r}(W)^k$ when $F$ is a forest (this follows because $W$ is a ``regular'' graphon) and that $t_{C_4^r}(W)<t_{K_r^r}(W)^3$ (because $C_4^r$ contains a non-spanning forest on 3 edges).  One can also check that $t_{C_3^r}(W)<t_{K_r^r}(W)^3$, namely because $W$ has the same homomorphism densities as $K_r^r$ and because
    \[\frac{\hom(C_3^r,K_r^r)}{r^{v(C_3^r)}}=\frac{r(r-1)(r-2)\cdot [(r-2)!]^3}{r^{3r-3}}<\left(\frac{r!}{r^r}\right)^3,\]
    since this inequality is equivalent to saying $ r(r-1)(r-2)\cdot r^3< (r(r-1))^3$.
    Now every linear $F$ either contains a forest on at least 4 edges or contains $C_3^r$ or $C_4^r$, from which the result follows.
\end{proof}

We now prove the result.  Let $F$ be a linear $r$-graph containing a loose triangle.  Let $W$ be the graphon from \Cref{cl:regularLoose}, and possibly by taking $N$-fold tensor powers, we can assume its edge density $q:=t_{K_r^r}(W)$ is sufficiently small and that there exists some $c<1$ sufficiently small with $t_{F'}(W)\le c q^3$ for all $F'\sub F$ which are either loose triangles or which contain at least 4 edges.  We will show that $F$ is not Sidorenko by considering the graphon $\half+\half W$, which by \Cref{cl:expand} is equivalent to showing
\begin{equation}2^{-e(F)}\sum_k \sum_{F'\sub F: e(F')=k} t_{F'}(W)=t_F\left(\half+\half W\right)<t_{K_r^r}\left(\half+\half W\right)^{e(F)}=2^{-e(F)}\sum_k {e(F)\choose k} q^k.\label{eq:looseTriangle}\end{equation}
Because $F$ is linear, every $F'\sub F$ with at most 3 edges is either a forest or a loose triangle.  By definition of $W$, in the former case we have $t_{F'}(W)=q^k$, and in the latter case we have $t_{F'}(W)\le c q^3$.  As $F$ contains at least one loose triangle, we have
\[(1-c) q^3+\sum_{k\le 3} \sum_{F'\sub F: e(F')=k} t_{F'}(W)\le \sum_{k\le 3} \sum_{F'\sub F: e(F')=k} q^k\le \sum_{k\le 3} {e(F)\choose k} q^k.\]
Note that every $F'\sub F$ with $e(F')>3$ has $t_{F'}(W)\le c q^3$ by assumption.  Thus by taking $c$ sufficiently small we have $\sum_{k>3} \sum_{F'\sub F: e(F')=k} t_{F'}(W)<(1-c)q^3$, which combined with the equation above implies \eqref{eq:looseTriangle} as desired.
\end{proof}
We note that a nearly identical proof goes through to show that $F$ is not Sidorenko whenever $F$ has odd girth (which was also proven in \cite{conlon2023extremal}).

\end{document}